\newfont{\cyr}{wncyr10 scaled\magstep0}
\newcommand{\F}{\mathbb{F}}
\newcommand{\G}{\mathbb{G}}
\newcommand{\Q}{\mathbb{Q}}
\newcommand{\R}{\mathbb{R}}
\newcommand{\Z}{\mathbb{Z}}
\newcommand{\mf}[1]{\mathfrak{#1}}
\DeclareMathOperator{\Aut}{Aut} 
\DeclareMathOperator{\Char}{char} 
\DeclareMathOperator{\Cl}{Cl} 
\DeclareMathOperator{\Gal}{Gal} 
\DeclareMathOperator{\Hom}{Hom} 
\DeclareMathOperator{\Ker}{Ker} 
\DeclareMathOperator{\length}{length} 
\DeclareMathOperator{\ord}{ord} 
\DeclareMathOperator{\rank}{rank} 
\DeclareMathOperator{\res}{res} 
\DeclareMathOperator{\Sel}{Sel} 
\DeclareMathOperator{\semisimple}{ss} 
\DeclareMathOperator{\tors}{tors} 
\DeclareMathOperator{\ur}{ur} 
\DeclareFontFamily{U}{wncy}{}
\DeclareFontShape{U}{wncy}{m}{n}{<->wncyr10}{}
\DeclareSymbolFont{mcy}{U}{wncy}{m}{n}
\DeclareMathSymbol{\Sha}{\mathord}{mcy}{"58}
\theoremstyle{plain}
 \newtheorem{theorem}{Theorem}[section]
 \crefname{theorem}{Theorem}{Theorems}
 \newtheorem{proposition}[theorem]{Proposition}
 \crefname{proposition}{Proposition}{Propositions}
 \newtheorem{lemma}[theorem]{Lemma}
 \crefname{lemma}{Lemma}{Lemmas}
 \newtheorem{corollary}[theorem]{Corollary}
 \crefname{corollary}{Corollary}{Corollaries}
 \crefname{conjecture}{Conjecture}{Conjectures}
 \crefname{hypothesis}{Hypothesis}{Hypotheses}
 \newtheorem{question}[theorem]{Question}
 \crefname{question}{Question}{Questions}
 \crefname{problem}{Problem}{Problems}
 \crefname{algorithm}{Algorithm}{Algorithms}
\theoremstyle{definition} 
 \crefname{definition}{Definition}{Definitions}
 \crefname{example}{Example}{Examples}
 \newtheorem{remark}[theorem]{Remark}
 \crefname{remark}{Remark}{Remarks}
\title{Elliptic analogue of irregular prime numbers for the $p^{n}$-division fields of the curves $y^{2} = x^{3}-(s^{4}+t^{2})x$}
\author{Naoto Dainobu}
\address[Naoto Dainobu]{Department of Mathematics \\ Faculty of Science and Technology \\ Keio University, 3-14-1, Hiyoshi, Kohoku, Yokohama, Japan}
\email{vicarious@keio.jp}
\author{Yoshinosuke Hirakawa}
\address[Yoshinosuke Hirakawa]{Department of Mathematics \\ Faculty of Science and Technology \\ Tokyo University of Science, 2641, Yamazaki, Noda, Chiba, Japan}
\email{hirakawa\_yoshinosuke@ma.noda.tus.ac.jp}
\author{Hideki Matsumura}
\address[Hideki Matsumura]{Department of Mathematics \\ Faculty of Science and Technology \\ Keio University, 3-14-1, Hiyoshi, Kohoku, Yokohama, Japan}
\email{hidekimatsumura@keio.jp}
\thanks{This research was supported by JSPS KAKENHI Grant Numbers 21J13502 and 21K13779.}
	\subjclass[2010]{primary 11G05; 
	secondary
		11R29,  	
		11R34,  	
		14G40.  	
		}
\date{\today}
\begin{document}


\maketitle

\begin{abstract}
A prime number $p$ is said to be irregular
if it divides the class number of the $p$-th cyclotomic field $\Q(\zeta_{p}) = \Q(\G_m[p])$.
In this paper, we study its elliptic analogue for the division fields of an elliptic curve.
More precisely,
for a prime number $p \geq 5$ and a positive integer $n$,
we study the $p$-divisibility of the class number of the $p^{n}$-division field $\Q(E[p^{n}])$ of an elliptic curve $E$ of the form $y^{2} = x^{3}-(s^{4}+t^{2})x$.
In particular, we construct a certain infinite subfamily consisting of curves
with novel properties that
they are of Mordell-Weil rank 1
and the class numbers of their $p^{n}$-division fields are divisible by $p^{2n}$.
Moreover,
we can prove that these division fields are not isomorphic to each other.
In our construction, we use recent results obtained by the first author.
\end{abstract}



\section{Introduction and main results}

\subsection{Background}

For every integer $N$ and an abelian group $A$,
we denote the subgroup of $A$ consisting of elements whose orders divide $N$ by $A[N]$.

Let $N$ be an integer,
$K$ be a field,
$\overline{K}$ be a fixed algebraic closure of $K$,
and $\mf{A}$ be a commutative group scheme of finite type (e.g.\ an elliptic curve $E$) defined over $K$.
For simplicity, suppose that $K$ is perfect.
Then, we have a natural Galois representation
\[
	\rho_{K, \mf{A}, n} : \Gal(\overline{K}/K) \to \Aut_{K}(\mf{A}(\overline{K})[N]).
\]
We define the $N$-division field $K(\mf{A}[N])$ of $\mf{A}$ over $K$ by the fixed field $K(\mf{A}[N]) := \overline{K}^{\Ker(\rho_{K, \mf{A}, n})}$.

One of the most classical examples of the $N$-division field is a cyclotomic field $\Q(\zeta_{N})$,
which corresponds to the case where $K = \Q$ and $\mf{A}$ is the multiplicative group scheme $\G_{m}$.
In this setting, a prime number $p$ is called \emph{irregular}
if the class number of $\Q(\zeta_{p}) = \Q(\G_{m}[p])$ is divisible by $p$
because Kummer's approach to Fermat's Last Theorem breaks down for irregular prime numbers \cite{Washington_text}.
\footnote{
	In fact, Kummer overcame, at least partially, such a difficulty arising from irregular prime numbers
	by some auxiliary method.
	However, since his methods depend on the \emph{degree} (or \emph{index}) of irregularity of each irregular prime number, such a bypass did not lead him to the complete proof of Fermat's Last Theorem.
	}
It has been an open question over 150 years
whether there exist infinitely many \emph{regular} (i.e., not irregular) prime numbers.
On the other hand,
it has been well-known that there exist infinitely many irregular prime numbers.
We refer the reader to Carlitz's elegant proof \cite{Carlitz_irregular},
which has a qualitative refinement \cite{Luca-PM-Pomerance}.
Moreover,
Ernvall \cite{Ernvall_1979,Ernvall_1983} generalized
the infinitude of prime numbers $p$ dividing the class numbers of $\Q(\G_{m}[p])$
to the infinitude of prime numbers $p$ (prime to a fixed $N$) dividing the class numbers of  $\Q(\G_{m}[Np])$
by means of generalized Bernoulli numbers.
These results lead us naturally to the following questions on \emph{elliptic analogue} of irregular prime numbers.

\begin{question} \label{deform_p}
Let $E$ be an elliptic curve defined over $\Q$.
Then, how often the class numbers of the $p$-division fields $\Q(E[p])$ are divisible by $p$
when $p$ runs over the prime numbers?
\end{question}

Recent results by Ray and Weston \cite[Theorems 5.1 and 5.2]{Ray-Weston}
give partial answers to this question.
On the other hand,
since there are infinitely many non-isomorphic elliptic curves over $\Q$,
it is also natural to ask the following variant of \cref{deform_p}.

\begin{question} \label{deform_E}
Let $p$ be a prime number.
Then, how often the class numbers of the $p$-division fields $\Q(E[p])$ are divisible by $p$
when $E$ runs over the elliptic curves over $\Q$?
\end{question}

To \cref{deform_E},
a recent conditional result by Ray and Weston \cite[Theorem 4.4]{Ray-Weston} in the same paper
gives an effective lower bound for the density $\underline{\mathfrak{d}}(\mathcal{E}_{p})$ of elliptic curves $E$
for which $\Hom_{\Gal(\Q(E[p])/\Q)}(\Cl(\Q(E[p])), E[p]) = 0$ for each odd prime number $p$.
Here and after, $\Cl(K)$ denotes the ideal class group of a number field $K$.
There are also unconditional results for small prime numbers $p$.
For $p = 2$,
we have some qualitative results to \cref{deform_E}
e.g.\ each of \cite{Bhargava_quartic_order,Davenport-Heilbronn_II,Ho-Shankar-Varma}.
A similar result can be deduced by combining \cite[Theorem 1.4]{Li_Trans} and \cite[Theorem 1.1]{Bhargava-Shankar_2Sel}.
In fact, the last result \cite[Theorem 1.1]{Bhargava-Shankar_2Sel} is generalized for $p = 3$ and $5$ 
in \cite{Bhargava-Shankar_3Sel} and \cite{Bhargava-Shankar_5Sel} respectively,
hence it might be possible to obtain similar $p$-divisibility results of the class numbers also for these small $p$ along this line unconditionally.

Both of the \cref{deform_p,deform_E} ask the stochastic aspects
of the $p$-divisibility of the class number of $\Q(E[p])$.
On the other hand,
there are several recent results on the deterministic aspects of
the growth of the $p$-part of $\Cl(\Q(E[p^{n}]))$ along $n \to \infty$.
For example,
the preceding works
\cite[Theorem 1.1]{Hiranouchi},
\cite[Theorem 1.1]{Sairaiji-Yamauchi_JNT}, and
\cite[Theorem 1.1]{Sairaiji-Yamauchi_JTNB}
give sufficient conditions on triplet $(E, p, n)$
so that the class number of $\Q(E[p^{n}])$ is divisible by a power of $p$.
More precisely,
all of their main results have the following form
\[
	\ord_{p}(\#\Cl(\Q(E[p^{n}]))) \geq 2n\rank E(\Q) - \text{(local contributions)}.
\]
In fact, these general bounds can imply the $p$-divisibility of the class number of $\Q(E[p])$ only if $\rank E(\Q) \geq 2$
because of the above local contributions.
Hence,
if we are interested in elliptic curves $E$ such that $\rank E(\Q) \leq 1$, 
we need a different approach.
Moreover, it is fair to say that the condition $\rank E(\Q) \geq 2$ is too restrictive at least hypothetically
because the conjecture of Goldfeld \cite{Goldfeld} and Katz-Sarnak \cite[\S5]{Katz-Sarnak}
predicts that a half of elliptic curves over $\Q$ have rank 0 and the other have rank 1.
Thus,
it is natural to focus on an infinite family of elliptic curves $E$ such that $\rank E(\Q) \leq 1$
especially for $p \geq 7$.

In this paper,
for any arbitrarily given prime power $p^{n}$ with $p \geq 5$,
we construct an explicit infinite family consisting of elliptic curves $E$ such that
$\rank E(\Q) = 1$ and the class numbers of $\Q(E[p^{n}])$ are divisible by $p^{2n}$.

\subsection{Main results}

Let $s, t \in \Z$ be integers such that $(s, t) \neq (0, 0)$
and $E_{s, t}$ be an elliptic curve defined by $y^{2} = x^{3}-(s^{4}+t^{2})x$.
The first main result of this paper is the following:

\begin{theorem} \label{infinite}
Let $p$ be a prime number and $n$ be an integer.
Suppose that $p \geq 5$.
Then, there exist infinitely many prime numbers of the form $l = s^{4}+t^{2}$ with $s, t \in \Z$
such that $\rank E_{s, t}(\Q) = 1$ and the class number of $\Q(E_{s, t}[p^{n}])$ is divisible by $p^{2n}$.
Moreover, these number fields $\Q(E_{s, t}[p^{n}])$ and $\Q(E_{s', t'}[p^{n}])$ are isomorphic to each other
if and only if the prime number $s^{4}+t^{2}$ coincides with the other ${s'}^{4}+{t'}^{2}$.
\end{theorem}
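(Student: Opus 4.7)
The plan is to combine three ingredients: a recent theorem of the first author, cited in the introduction as the main technical input, that gives a lower bound on $\ord_{p}(\#\Cl(\Q(E_{s,t}[p^{n}])))$ in terms of the Mordell--Weil rank and the arithmetic of $l = s^{4}+t^{2}$; the Friedlander--Iwaniec theorem on primes of the form $a^{2}+b^{4}$; and a ramification argument for the field-isomorphism statement.

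First, I would apply the first author's result, which (under mild hypotheses on $(s,t,p,n)$ such as $l = s^{4}+t^{2}$ being prime, $\rank E_{s,t}(\Q) = 1$, and an explicit congruence condition on $l$) yields $p^{2n} \mid \#\Cl(\Q(E_{s,t}[p^{n}]))$. This step reduces \cref{infinite} to producing infinitely many primes $l = s^{4}+t^{2}$ meeting those hypotheses and satisfying $\rank E_{s,t}(\Q) = 1$.

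For the second step, I would invoke the Friedlander--Iwaniec theorem on the infinitude of primes of the form $a^{2}+b^{4}$, in a form that allows one to prescribe the residue class of $l$ modulo any fixed auxiliary modulus; this supplies the congruence needed for the first author's theorem. To force $\rank E_{s,t}(\Q) = 1$, I would exploit the CM structure by $\Z[i]$ together with the classical 2-descent analysis for $y^{2} = x^{3} - l x$ with $l$ prime: the rank is controlled by the quartic residue symbol $(2/l)_{4}$ and related biquadratic data, which can be pinned down by further congruence conditions on $l$. Combining these, one selects a congruence class modulo a fixed integer in which Friedlander--Iwaniec produces infinitely many primes of the form $s^{4}+t^{2}$ with the required properties, and one verifies directly that the point $(-s^{2}, st)$ (or an analogous explicit point on $E_{s,t}$) has infinite order to obtain $\rank E_{s,t}(\Q) \geq 1$, while descent gives the matching upper bound.

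For the third step, I would argue via ramification. Since $l = s^{4}+t^{2}$ is prime, the discriminant $2^{6}l^{3}$ of $E_{s,t}$ shows that the primes of bad reduction are contained in $\{2, l\}$, and the reduction at $l$ is additive (in fact potentially good, by the CM). By the N\'eron--Ogg--Shafarevich criterion the extension $\Q(E_{s,t}[p^{n}])/\Q$ is unramified outside $\{2,p,l\}$, while additive reduction at $l$ forces genuine ramification at $l$ for $p \neq l$. Any isomorphism $\Q(E_{s,t}[p^{n}]) \cong \Q(E_{s',t'}[p^{n}])$ preserves the set of ramified rational primes, so if $l,l' > p$ then $l = l'$, proving the last assertion. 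The main obstacle is the second step: simultaneously arranging the Friedlander--Iwaniec congruence, the first author's congruence, and the quartic residue condition that pins down $\rank E_{s,t}(\Q) = 1$ requires a careful compatibility check of the auxiliary moduli involved.
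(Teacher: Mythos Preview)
Your three-step outline matches the paper's proof almost exactly: \cref{main_result} for the $p^{2n}$-divisibility, Friedlander--Iwaniec for the supply of primes, \S5 (a 2-isogeny descent giving $\rank E^{(l)}(\Q)\le 1$ for $l\equiv 9\bmod 16$) for the rank, and the N\'eron--Ogg--Shafarevich criterion for the non-isomorphism of the division fields.

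There is one genuine imprecision worth correcting. You repeatedly describe the input to the first author's theorem as ``an explicit congruence condition on $l$'' and then invoke Friedlander--Iwaniec ``in a form that allows one to prescribe the residue class of $l$''. But the hypothesis of \cref{main_result} is $st\equiv 0\pmod{p^{n+1}}$ (together with $\gcd(s,t)=1$), which is a condition on the \emph{variables} $(s,t)$, not on $l=s^{4}+t^{2}$; no congruence on $l$ alone forces $p^{n+1}\mid st$. Accordingly, the refinement of Friedlander--Iwaniec that the paper uses (see \cite[p.~947]{Friedlander-Iwaniec}) is the version letting one prescribe the residue of one of the variables (e.g.\ $p^{n+1}\mid s$), not merely the residue of the prime $l$. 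The rank condition $l\equiv 9\bmod 16$ is likewise imposed via $s\equiv 0\bmod 2$ and $t\equiv\pm3\bmod 8$. Once you phrase the congruences on $(s,t)$ rather than on $l$, your compatibility concern in the last paragraph disappears: the moduli $2p^{n+1}$ and $8$ are handled by the Chinese remainder theorem since $p\ge 5$.

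A smaller point: in your ramification step, ``additive reduction forces ramification'' is not automatic; the paper checks that the Tamagawa number at $l$ is $c(E/\Q_{l})=2$ (Kodaira type III), so the refined criterion (\cref{refined_NOS}) applies for every $p\ge 5$.
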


The proof is based on the following theorem,
which is the second main result of this paper.

\begin{theorem} \label{main_result}
Let $p$ be a prime number and $n$ be an integer.
Suppose that $p \geq 5$,
$\gcd(s, t) = 1$,
$st \equiv 0 \bmod{p^{n+1}}$,
and $s^{4}+t^{2}$ is fourth-power-free and not a square.
Then, there exists a non-zero $\Gal(\Q(E_{s, t}[p])/\Q)$-equivariant homomorphism
\[
	\Cl(\Q(E_{s, t}[p^{n}])) \to E_{s, t}[p^{n}].
\]
Moreover, the class number of $\Q(E_{s, t}[p^{n}])$ is divisible by $p^{2n}$.
\end{theorem}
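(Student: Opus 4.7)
The plan is to construct an everywhere-unramified abelian extension $L_{n}/K$ of $K := \Q(E_{s,t}[p^{n}])$ of degree $p^{2n}$, whose Galois group embeds into $E_{s,t}[p^{n}]$ via a Kummer cocycle attached to a $\Q$-rational point, in the spirit of \cite{Hiranouchi,Sairaiji-Yamauchi_JTNB}, and then to translate this into class-group divisibility via class field theory. The starting point is the rational point $P := (-s^{2}, st) \in E_{s,t}(\Q)$, whose verification is a direct substitution. Under the hypotheses that $s^{4}+t^{2}$ is fourth-power-free and not a square, the standard explicit description of torsion on curves of the form $y^{2}=x^{3}-Dx$ gives $E_{s,t}(\Q)_{\tors} = \{O, (0,0)\}$; since $st \neq 0$, the point $P$ is distinct from these and hence has infinite order. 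Fixing $Q_{n} \in E_{s,t}(\overline{\Q})$ with $[p^{n}]Q_{n} = P$, the field $L_{n} := K(Q_{n})$ is Galois over $\Q$, and the Kummer cocycle $\sigma \mapsto Q_{n}^{\sigma}-Q_{n}$ gives an injective Galois-equivariant homomorphism $\Gal(L_{n}/K) \hookrightarrow E_{s,t}[p^{n}]$.

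The technical heart is to verify that $L_{n}/K$ is everywhere unramified. For a place of $K$ above a rational prime $\ell \neq p$ at which $E_{s,t}$ has good reduction, unramifiedness is automatic by N\'eron-Ogg-Shafarevich. At a place $v \mid p$, the hypothesis $st \equiv 0 \pmod{p^{n+1}}$ combined with $\gcd(s,t)=1$ forces $p \nmid s^{4}+t^{2}$, so $E_{s,t}$ has good reduction at $p$, and $v_{p}(y(P)) \geq n+1$. The point $P$ itself reduces to the 2-torsion point $T := (0,0)$ modulo $v$, but $P+T$ does lie in the formal group; indeed, a direct addition-law calculation yields $P+T = ((s^{4}+t^{2})/s^{2},\, t(s^{4}+t^{2})/s^{3})$ (when $p\mid s$) and shows that the formal parameter $z(P+T)=-x(P+T)/y(P+T)$ has normalized $p$-adic valuation at least $n+1$. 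For $p \geq 5$ and good reduction, the formal logarithm then gives $P+T \in p^{n} \hat{E}_{s,t}(\mathfrak{m}_{v}) \subseteq p^{n} E_{s,t}(K_{v})$. Since $T$ has order $2$ and $\gcd(2, p^{n}) = 1$, a B\'ezout argument yields $P \in p^{n} E_{s,t}(K_{v})$, so $L_{n}/K$ splits at $v$. At the remaining bad-reduction primes, which divide $s^{4}+t^{2}$ (and possibly $2$), I would invoke the first author's explicit analysis of the N\'eron models of this family to conclude that the Tamagawa numbers are prime to $p \geq 5$ and that the image of $P$ in each component group is trivial modulo $p^{n}$, giving an unramified local Kummer class.

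Once everywhere unramifiedness is established, class field theory yields a surjection $\Cl(K) \twoheadrightarrow \Gal(L_{n}/K)$; composition with the Kummer embedding produces the desired nonzero Galois-equivariant homomorphism $\Cl(K) \to E_{s,t}[p^{n}]$. For the stronger divisibility $p^{2n} \mid \#\Cl(K)$, it suffices to check that the Kummer image is all of $E_{s,t}[p^{n}]$. This image is a $\Gal(K/\Q)$-stable subgroup of $E_{s,t}[p^{n}]$, and the CM structure of $E_{s,t}$ together with $p \geq 5$ ensures that the image of $\Gal(K/\Q)$ in $\GL_{2}(\Z/p^{n}\Z)$ contains enough of the normalizer of a Cartan to act irreducibly, so any nonzero stable subgroup must be the whole $E_{s,t}[p^{n}]$; nonzero-ness, in turn, follows because the global obstruction to $P \in p^{n} E_{s,t}(K)$ is detected by a stricter $v_{p}$-estimate (one loses one $p$-adic valuation to the $p^{n}$-th root). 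The main obstacle I anticipate is the bookkeeping at the bad-reduction primes dividing $s^{4}+t^{2}$: the fourth-power-freeness assumption and the coprimality of $s,t$ must be carefully combined with the Tate algorithm to pin down both the Kodaira types and the reduction of $P$, and this is precisely where the recent results of the first author cited in the introduction enter as the essential input.
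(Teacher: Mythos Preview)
Your overall strategy---pushing the Kummer class of $P=(-s^{2},st)$ through the restriction map and checking that it lands in the unramified part of $\Hom_{\Gal(K/\Q)}(G_{K},E_{s,t}[p^{n}])$---is exactly the paper's. The serious gap is in your last paragraph, where you need the image of $\Gal(L_{n}/K)$ in $E_{s,t}[p^{n}]$ to be all of $E_{s,t}[p^{n}]$. Your claim that ``any nonzero stable subgroup must be the whole $E_{s,t}[p^{n}]$'' is false for $n>1$: the subgroups $E_{s,t}[p^{m}]$ for $0<m<n$ are proper, nonzero, and Galois-stable. What irreducibility of $E_{s,t}[p]$ actually buys you is that every stable subgroup is one of the $E_{s,t}[p^{m}]$, so the image equals $E_{s,t}[p^{n}]$ if and only if $P\notin pE_{s,t}(K)$. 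Your ``stricter $v_{p}$-estimate'' does not establish this: it is a local statement at $p$ and says nothing about global $p$-divisibility of $P$ over $K$ (indeed, you have just argued that $P\in p^{n}E_{s,t}(K_{v})$ locally). The paper supplies this step by a height argument (\cref{primitivity}): combining the Voutier--Yabuta lower bound for $\hat{h}$ on curves $y^{2}=x^{3}+ax$ with Silverman's height-difference bound shows that $P$ extends to a basis of $E_{s,t}(\Q)$ modulo torsion, hence $P\notin pE_{s,t}(\Q)$. One then needs $H^{1}(\Q(E_{s,t}[p^{n}])/\Q,E_{s,t}[p^{n}])=0$---which you do not address---to promote this to $P\notin pE_{s,t}(K)$; the paper checks this case-by-case for $p=5,7,11$ using the classification in \cite{Lawson-Wuthrich}.

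Two smaller points. First, your formal-group computation via $P+T$ only treats the case $p\mid s$; when $p\mid t$ one finds $z(P+T)=-s/t$ with negative $p$-adic valuation, so $P+T$ is not in the kernel of reduction. The paper's uniform fix (\cref{unramified_at_p}) is to use $2P$, since $x(2P)=\bigl((2s^{4}+t^{2})/(2st)\bigr)^{2}$ has $v_{p}=-2v_{p}(st)$ in both cases. Second, no N\'eron-model analysis of this family is needed at the bad primes: since $j(E_{s,t})=2^{6}\cdot 3^{3}$ is an integer, the hypothesis ``the denominator of $j(E)$ is $p$-th-power-free'' in \cref{Dainobu_criterion} is vacuously satisfied, and that hypothesis is precisely what handles ramification away from $p$ in the first author's framework.
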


The above results show the divisibility of the order of the ideal class group $\Cl(\Q(E_{s, t}[p^{n}]))$.
Since it admits a natural Galois action,
it is also natural to study the Galois module structure of $\Cl(\Q(E_{s, t}[p^{n}]))$.
For $n = 1$, we give the following theorem,
which is our third main result.

\begin{theorem} \label{refine_Fujita-Terai}
Let $p$ be a prime number.
Suppose that $p \geq 5$,
$t = \tau^{2}$ ($\tau \in \Z$),
$s^{4}+\tau^{4}$ is fourth-power-free,
$s\tau \equiv 0 \bmod{p^{2}}$,
and $E[p]$ is an irreducible $\F_{p}[\Gal(\Q(E_{s, \tau^{2}}[p])/\Q)]$-module.
Then, there exists a surjective $\Gal(\Q(E_{s, \tau^{2}}[p])/\Q)$-equivariant homomorphism
\[
	\left( \frac{\Cl(\Q(E_{s, \tau^{2}}[p]))}{p\Cl(\Q(E_{s, \tau^{2}}[p]))} \right)^{\semisimple}
	\to E_{s, \tau^{2}}[p]^{\oplus 2}.
\]
In particular, the class number of $\Q(E_{s, \tau^{2}}[p])$ is divisible by $p^{4}$.
\end{theorem}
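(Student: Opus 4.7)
I will exploit the special shape $t = \tau^2$ to produce two rational points on $E_{s,\tau^2}$, apply the construction underlying \cref{main_result} separately to each, and combine the resulting two homomorphisms into a surjection onto $E_{s,\tau^2}[p]^{\oplus 2}$; the main obstacle will be to show these two homomorphisms are linearly independent over the endomorphism ring $\End_{G}(E_{s,\tau^2}[p])$.

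When $t = \tau^2$, the equation $y^2 = x^3 - (s^4 + \tau^4)x$ is symmetric under the exchange $s \leftrightarrow \tau$, and in addition to the canonical point $P_1 := (-s^2, s\tau^2)$ already used in the proof of \cref{main_result}, we obtain a second rational point $P_2 := (-\tau^2, s^2\tau)$. The hypothesis that $s^4+\tau^4$ is fourth-power-free forces $\gcd(s,\tau) = 1$ (else $\gcd(s,\tau)^4$ would divide $s^4+\tau^4$), while Fermat's classical infinite descent ensures $s^4 + \tau^4$ is not a square for coprime $(s,\tau) \neq (0,0)$; thus the hypotheses of \cref{main_result} are satisfied both for the parameter pair $(s, \tau^2)$ and, after swapping roles, for $(\tau, s^2)$, since the divisibility $p^2 \mid s\tau$ implies both $p^2 \mid s \cdot \tau^2$ and $p^2 \mid \tau \cdot s^2$. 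Moreover, the $2$-torsion of $E_{s,\tau^2}(\Q)$ is exactly $\{O, (0,0)\}$, so $P_1$ and $P_2$ both have infinite order.

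Applying the construction of \cref{main_result} with $n = 1$ separately to each of these two rational points yields two $G$-equivariant homomorphisms
\[
	\phi_i \colon \Cl(\Q(E_{s,\tau^2}[p])) \to E_{s,\tau^2}[p], \qquad i = 1, 2,
\]
where $G := \Gal(\Q(E_{s,\tau^2}[p])/\Q)$; each $\phi_i$ is built from the Kummer class $\delta(P_i) \in H^1(\Q, E_{s,\tau^2}[p])$, and is non-zero by \cref{main_result} applied to the respective parameter pair. Because $E_{s,\tau^2}[p]^{\oplus 2}$ is semisimple, $\phi_1 \oplus \phi_2$ factors canonically through the semisimple quotient $(\Cl(\Q(E_{s,\tau^2}[p]))/p\Cl(\Q(E_{s,\tau^2}[p])))^{\semisimple}$. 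By the irreducibility hypothesis, the image of each $\phi_i$ is a non-zero $G$-submodule of the irreducible $\F_p[G]$-module $E_{s,\tau^2}[p]$, hence equals it, so each $\phi_i$ is already surjective on its own.

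The decisive step is to upgrade this to surjectivity of $\phi_1 \oplus \phi_2$ onto $E_{s,\tau^2}[p]^{\oplus 2}$. Writing $D := \End_G(E_{s,\tau^2}[p])$, which is a finite division $\F_p$-algebra by Schur's lemma, the $G$-submodules of $E_{s,\tau^2}[p]^{\oplus 2}$ that surject onto both factors are exactly the whole module together with the graphs $\{(v, \alpha v) : v \in E_{s,\tau^2}[p]\}$ of elements $\alpha \in D$; hence the question reduces to ruling out any relation $\phi_2 = \alpha \circ \phi_1$ with $\alpha \in D$, equivalently the $D$-linear independence of $\delta(P_1)$ and $\delta(P_2)$. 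This is the main obstacle, and I would attack it by a local analysis at $p$: since $p \nmid s^4+\tau^4$ the curve has good reduction at $p$, and the condition $p^2 \mid s\tau$ together with $\gcd(s,\tau) = 1$ forces exactly one of $s, \tau$ to be divisible by $p^2$ while the other is a $p$-adic unit; under this asymmetry, a direct check shows that $P_1$ and $P_2$ reduce modulo $p$ to two \emph{distinct} non-trivial $2$-torsion points of $E_{s,\tau^2}(\F_p)$, which should give rise to a local cohomological invariant separating $\delta(P_1)$ from every $D$-multiple of $\delta(P_2)$. Once this independence is in hand, $\phi_1 \oplus \phi_2$ is surjective and $(\Cl/p\Cl)^{\semisimple}$ has order at least $\#E_{s,\tau^2}[p]^{\oplus 2} = p^4$, yielding $p^4 \mid \#\Cl(\Q(E_{s,\tau^2}[p]))$.
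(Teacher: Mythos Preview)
Your setup is sound: the symmetry $s \leftrightarrow \tau$ produces the second point $P_2 = (-\tau^2, s^2\tau)$, your verification that the hypotheses of \cref{main_result} apply to both parameter pairs is correct, and you have correctly isolated the decisive issue as the $D$-linear independence of the two resulting homomorphisms.

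The gap is in your proposed attack on that independence. Your local argument at $p$ cannot work, for the following reason. The very fact that $\phi_1$ and $\phi_2$ land in the \emph{unramified} part of $\Hom_G(\Gal(\overline{\Q}/\Q(E[p])), E[p])$ rests on showing that $P_1, P_2 \in E(\Q)_{\ur,p} = E(\Q) \cap pE(\Q_p)$ (this is exactly what \cref{unramified_at_p} provides, applied to each parameter pair). But $P_i \in pE(\Q_p)$ means precisely that the local Kummer class $\delta(P_i)|_{\Q_p} \in H^1(\Q_p, E[p])$ vanishes. So both local classes at $p$ are zero, and no invariant computed at $p$ can separate them. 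The observation that $P_1$ and $P_2$ reduce to distinct $2$-torsion points of $E(\F_p)$ is correct but irrelevant here: since $p$ is odd, every $2$-torsion point lies in $pE(\F_p)$, and in any case it is the class in $E(\Q_p)/pE(\Q_p)$ that controls the local Kummer image, and that class is trivial for both points.

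The paper closes this gap globally rather than locally. It invokes the theorem of Fujita--Terai that, when $s^4+\tau^4$ is fourth-power-free, the pair $P_{s,\tau^2}, P_{\tau,s^2}$ extends to a minimal system of generators of $E_{s,\tau^2}(\Q)$; in particular $P_1$ and $P_2$ are $\Z$-linearly independent modulo torsion, hence $\F_p$-linearly independent in $E(\Q)/pE(\Q)$, giving $r_{\ur,p} \geq 2$. The passage from $r_{\ur,p} \geq 2$ to the surjection onto $E[p]^{\oplus 2}$ (including the $D$-independence issue you raise) is then absorbed into \cref{Dainobu_multiplicity}, which the paper applies as a black box in place of \cref{Dainobu_criterion}. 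So the missing ingredient in your argument is a genuine global independence statement for $P_1$ and $P_2$; this is supplied by a height computation in the literature rather than by any local consideration.
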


Let us explain the outline of this paper.
In \S2, we review the previous work of the first author \cite{Dainobu_preparation},
which gives a lower bound for the number of Galois equivariant morphisms of $\Cl(\Q(E[p^{n}]))$ to $E[p^{n}]$
under certain technical conditions on an elliptic curve $E$.
After that, in \S3, we verify these conditions for our elliptic curves $E_{s, t}$,
which completes the proof of \cref{main_result}.
A key point is that
a recent work of \cite{Voutier-Yabuta_quartic} ensures that
an obvious rational point $P_{s, t} := (-s^{2}, st)$ on $E_{s, t}$
does not fall into a subgroup $pE_{s, t}(\Q)$ of $E_{s, t}(\Q)$,
which ensures the most important necessary condition in \cite{Dainobu_preparation}.
In \S4,
we deduce \cref{infinite} from \cref{main_result}.
This will be done by a collaboration of two monumental works in number theory;
one is the N\'eron-Ogg-Shafarevich criterion \cite{Serre-Tate} for good reduction of elliptic curves
and the other is the Friedlander-Iwaniec's theorem \cite{Friedlander-Iwaniec} for prime numbers of the form $s^{4}+t^{2}$ with integers $s, t$.
Finally, in \S5,
we give a proof of the fact that $\rank E_{s, t}(\Q) = 1$ if $s^{4}+t^{2}$ is a prime number congruent to $9 \bmod{16}$.
This implies that for any arbitrarily given prime power $p^{n}$ with $p \geq 5$,
we can construct an explicit infinite family consisting of elliptic curves $E$ such that
$\rank E(\Q) = 1$,
their $p^{n}$-division fields $\Q(E[p^{n}])$ are non-isomorphic to each other,
and the class numbers of $\Q(E[p^{n}])$ are divisible by $p^{2n}$.
Although there are several preceding works
(e.g.\ \cite{Sairaiji-Yamauchi_JNT,Sairaiji-Yamauchi_JTNB,Hiranouchi} mentioned above and \cite{Ohshita,Prasad-Shekher})
on the $p$-divisibility of the class numbers of $\Q(E[p^{n}])$,
as far as the authors know,
there is no literature including an infinite family of elliptic curves having the above properties.

\section{A sufficient condition for $p$-divisibility of the class number}

Let $i$ be a non-negative integer.
For every separable extension $L/K$ of fields
and a $\Gal(L/K)$-module $M$,
we abbreviate the Galois cohomology group $H^{i}(\Gal(L/K), M)$ by $H^{i}(L/K, M)$.
Moreover, if $L$ is a separable closure of $K$,
then we abbreviate $H^{i}(L/K, M)$ by $H^{i}(K, M)$.
For general properties of Galois cohomology groups,
we refer the reader to \cite{NSW}.

In what follows, we assume that $K$ is a finite extension of $\Q$.

Let $\mathfrak{A}$ be a commutative group scheme of finite type defined over $K$,
and $N$ be an integer.
We are interested in the ideal class group $\Cl(K(\mathfrak{A}[N]))$.
By the class field theory (see e.g.\ \cite{Janusz,NSW}),
we have a canonical isomorphism $\Cl(K(\mathfrak{A}[N])) \simeq \Gal(K(\mathfrak{A}[N])^{\ur}/K(\mathfrak{A}[N]))$,
where $K(\mathfrak{A}[N])^{\ur}$ denotes the maximal unramified abelian extension of $K(\mathfrak{A}[N])$.
Therefore,
in order to obtain information of $\Cl(K(\mathfrak{A}[N]))$,
it is natural to study the subgroup of $\Hom(\Gal(\overline{K}/K(\mathfrak{A}[N])), A)$
consisting of unramified homomorphisms for several (or all if possible) finite abelian groups $A$.
Moreover,
since the group $\Cl(K(\mathfrak{A}[N]))$ admits a natural action of the Galois group $\Gal(K(\mathfrak{A}[N])/K)$,
it is more natural to study the subgroup
\[
	\Hom_{\Gal(K(\mathfrak{A}[N])/K)}(\Gal(\overline{K}/K(\mathfrak{A}[N])), M)
	\simeq H^{1}(K(\mathfrak{A}[N]), M)^{\Gal(K(\mathfrak{A}[N])/K)}
\]
consisting of unramified elements for several (or all if possible) finite $\Gal(K(\mathfrak{A}[N])/K)$-modules $M$.
In particular,
it is interesting, at least as a first step,
to carry out the above plan for the $\Gal(K(\mathfrak{A}[N])/K)$-module $M = \mathfrak{A}[N]$.

By the definition of the $\Gal(K(\mathfrak{A}[N])/K)$-module $\mathfrak{A}[N]$,
we have an exact sequence of $\Gal(\overline{K}/K)$-modules
\[
	0 \to \mathfrak{A}[N] \to \mathfrak{A}(\overline{K}) \xrightarrow{N} \mathfrak{A}(\overline{K}).
\]
In what follows,
we assume that the last map $N : \mathfrak{A}(\overline{K}) \to \mathfrak{A}(\overline{K})$ is surjective.
Then, the above exact sequence yields a short exact sequence of the Galois cohomology groups,
the so called Kummer sequence:
\[
	0 \to \frac{\mathfrak{A}(K)}{N\mathfrak{A}(K)}
	\xrightarrow{\kappa = \kappa_{\mathfrak{A}, N}} H^{1}(K, \mathfrak{A}[N])
		\to H^{1}(K, \mathfrak{A})[N] \to 0.
\]
On the other hand,
the following inflation-restriction exact sequence is one of the most elementary tools in the theory of Galois cohomology groups:
\[
	1
	\to H^{1}(K(\mathfrak{A}[N])/K, \mathfrak{A}[N])
		\xrightarrow{\inf} H^{1}(K, \mathfrak{A}[N])
	\xrightarrow{\res} H^{1}(K(\mathfrak{A}[N]), \mathfrak{A}[N])^{\Gal(K(\mathfrak{A}[N])/K)}
		\to \cdots
\]
Therefore,
we obtain a natural composite homomorphism induced by the Kummer map $\kappa$ and the restriction map $\res$
\[
	\frac{\mathfrak{A}(K)}{N\mathfrak{A}(K)}
	\to \frac{H^{1}(K, \mathfrak{A}[N])}{H^{1}(K(\mathfrak{A}[N])/K, \mathfrak{A}[N])} \hookrightarrow \Hom_{\Gal(K(\mathfrak{A}[N])/K)}(\Gal(\overline{K}/K(\mathfrak{A}[N])), \mathfrak{A}[N]).
\]
This suggests that
we can construct non-zero elements in the right-most side
by using non-zero elements in the left-most side.
In this view point,
it is clear that $H^{1}(K(\mathfrak{A}[N])/K, \mathfrak{A}[N])$ plays a role as the obstruction for the transformation of the non-zero elements in $\mathfrak{A}(K)/N\mathfrak{A}(K)$
to non-zero homomorphisms.

On the other hand,
since we are interested in unramified homomorphisms of $\Gal(\overline{K}/K(\mathfrak{A}[N]))$ to $\mathfrak{A}[N]$,
we want to check that which elements in $\mathfrak{A}(K)/N\mathfrak{A}(K)$ correspond to unramified homomorphisms.
This motivates us to define
\[
	\mathfrak{A}(K)_{\ur, N}
	= \Ker\left( \mathfrak{A}(K) \to \prod_{v} \frac{\mathfrak{A}(K_{v}^{\ur})}{N\mathfrak{A}(K_{v}^{\ur})} \right).
\]
Along this line,
in a forthcoming paper \cite{Dainobu_preparation},
the first author studied some conditions under which
$\res(\kappa(\mathfrak{A}(K)_{\ur, N})) \neq 0$
for a fixed elliptic curve $\mathfrak{A} = E$ defined over $K = \Q$ and a fixed prime power $N = p^{n}$.
The results are summarized as follows:

\begin{theorem} [{\cite{Dainobu_preparation}}] \label{Dainobu_criterion}
Let $E$ be an elliptic curve defined over $\Q$,
$p$ be a prime number,
and $n$ be a positive integer.
Suppose that the following conditions hold.
\begin{enumerate}
\item
$E$ has good reduction at $p$.

\item
$p \geq 5$ and the denominator of $j(E)$ is $p$-th power free.

\item
$H^{1}(\Q(E[p^{n}])/\Q, E[p^{n}]) = 0$.
\end{enumerate}
Then, we have $E(\Q)_{\ur, p^{n}} = E(\Q) \cap p^{n}E(\Q_{p})$.
Moreover, if we denote the length of $E(\Q)_{\ur, p^{n}}/p^{n}E(\Q)$ as a $\Z_{p}$-module by $r_{\ur, p^{n}}$.
Then, the following inequality holds.
\[
	\length_{\Z_{p}} \Hom_{\Gal(K(E[p^{n}])/\Q)}(\Cl(\Q(E[p^{n}])), E[p^{n}]) \geq r_{\ur, p^{n}}.
\]
\end{theorem}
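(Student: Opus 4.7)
The plan is to separate the statement into two pieces: first, computing $E(\Q)_{\ur,p^n}$ by a place-by-place local analysis, and second, propagating elements of this group through the Kummer and inflation-restriction maps already assembled in the excerpt, to produce unramified Galois-equivariant homomorphisms on the ideal class group.

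For the first step, I would show that at every place $v$ of $\Q$ with $v \neq p$, the local quotient $E(\Q_{v})/p^{n}E(\Q_{v}^{\ur})$ is trivial; intersecting these conditions over $v \neq p$ then yields $E(\Q)_{\ur,p^{n}} = E(\Q) \cap p^{n}E(\Q_{p})$. At a finite prime $\ell \neq p$ of good reduction, the formal-group kernel of reduction is pro-$\ell$ and hence uniquely $p^{n}$-divisible, while the reduction $\widetilde{E}(\overline{\F}_{\ell})$ is a divisible torsion group, so $E(\Q_{\ell}^{\ur})$ itself is $p^{n}$-divisible. At a prime $\ell \neq p$ of additive reduction, the identity component of the N\'eron special fiber remains $p^{n}$-divisible, and the component group has order at most $4$, so hypothesis $p \geq 5$ rules out obstructions. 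At a prime $\ell$ of multiplicative reduction, Tate's uniformization gives $E(\Q_{\ell}^{\ur}) \simeq (\Q_{\ell}^{\ur})^{\times}/q_{E}^{\Z}$ with $v_{\ell}(q_{E}) = -v_{\ell}(j(E))$; hypothesis (2), namely that the denominator of $j(E)$ is $p$-th power free, forces $|v_{\ell}(j(E))| < p$ and in particular $p \nmid v_{\ell}(j(E))$, which is precisely the condition making multiplication-by-$p^{n}$ on the Tate uniformization surjective. The archimedean place causes no obstruction since $p$ is odd.

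For the second step, the Kummer map gives an injection $\kappa : E(\Q)/p^{n}E(\Q) \hookrightarrow H^{1}(\Q, E[p^{n}])$, and hypothesis (3) together with the inflation-restriction sequence makes
\[
\res : H^{1}(\Q, E[p^{n}]) \hookrightarrow H^{1}(\Q(E[p^{n}]), E[p^{n}])^{\Gal(\Q(E[p^{n}])/\Q)}
\]
injective. By the construction of $E(\Q)_{\ur,p^{n}}$, classes arising from it are unramified at every finite place, and hence factor through the maximal unramified abelian quotient of $\Gal(\overline{\Q}/\Q(E[p^{n}]))$, which by global class field theory is $\Cl(\Q(E[p^{n}]))$. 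Thus $\res \circ \kappa$ restricts to an injection of $E(\Q)_{\ur,p^{n}}/p^{n}E(\Q)$, a finitely generated $\Z_{p}$-module of length $r_{\ur,p^{n}}$, into $\Hom_{\Gal(\Q(E[p^{n}])/\Q)}(\Cl(\Q(E[p^{n}])), E[p^{n}])$, yielding the claimed length inequality.

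The main obstacle I anticipate is the local analysis at bad primes: matching the wording of hypothesis (2) against the Tate-curve calculation at multiplicative primes, and uniformly treating the additive and good reduction cases, so as to conclude that only $v = p$ contributes to $E(\Q)_{\ur,p^{n}}$. A secondary delicacy is verifying that Kummer classes attached to elements of $E(\Q)_{\ur,p^{n}}$ are in fact unramified as cohomology classes of $\Gal(\overline{\Q}/\Q(E[p^{n}]))$ rather than merely locally trivial upstairs; this is where hypothesis (1) enters, controlling the behavior above $p$ in a way compatible with the global unramified condition.
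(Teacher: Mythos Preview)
Your approach matches the paper's own sketch (which appears only as a remark, the full proof being deferred to \cite{Dainobu_preparation}): local $p^{n}$-divisibility of $E(\Q_{v}^{\ur})$ at $v \neq p$ via hypotheses $p \geq 5$ and (2), injectivity of $\res$ via hypothesis (3), and hypothesis (1) handling the prime $p$.

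One clarification on where hypothesis (1) enters. Your local analysis at $v \neq p$ yields only $E(\Q)_{\ur,p^{n}} = E(\Q) \cap p^{n}E(\Q_{p}^{\ur})$, since the defining condition at $v = p$ is membership in $p^{n}E(\Q_{p}^{\ur})$, not $p^{n}E(\Q_{p})$. Upgrading this to $E(\Q) \cap p^{n}E(\Q_{p})$ is precisely where good reduction at $p$ is needed, via the vanishing $H^{1}(\Q_{p}^{\ur}/\Q_{p}, E(\Q_{p}^{\ur})) = 0$ (smoothness of the N\'eron model plus Lang's theorem on the special fiber), which makes $E(\Q_{p})/p^{n}E(\Q_{p}) \hookrightarrow E(\Q_{p}^{\ur})/p^{n}E(\Q_{p}^{\ur})$. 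By contrast, the point you flag as delicate---that the restricted Kummer class of $P \in E(\Q)_{\ur,p^{n}}$ is unramified at places above $p$---follows directly from $P \in p^{n}E(\Q_{p}^{\ur})$: any $p^{n}$-th root $Q \in E(\Q_{p}^{\ur})$ is fixed by the inertia of $\Q_{p}$, hence by the (smaller) inertia of $\Q(E[p^{n}])_{w}$, and the difference between two such roots lies in $E[p^{n}] \subset E(\Q(E[p^{n}])_{w})$. So hypothesis (1) belongs to the first step rather than the second.
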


\begin{theorem} [multiplicity bound {\cite{Dainobu_preparation}}] \label{Dainobu_multiplicity}
In the setting of \cref{Dainobu_criterion},
suppose that $n = 1$ and $E[p]$ is an irreducible $\F_{p}[\Gal(\Q(E[p])/\Q)]$-module.
Then, there exists a $\Gal(\Q(E[p])/\Q)$-equivariant surjective homomorphism
\[
	\left( \frac{\Cl(\Q(E[p]))}{p\Cl(\Q(E[p]))} \right)^{\semisimple} \to E[p]^{\oplus r_{\ur, p}},
\]
whose domain is the semi-simplification of $\Cl(\Q(E[p]))/p\Cl(\Q(E[p]))$
as an $\F_{p}[\Gal(\Q(E[p])/\Q)]$-module.
\end{theorem}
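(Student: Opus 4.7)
The plan is to apply Theorem \ref{Dainobu_multiplicity} to $E := E_{s, \tau^{2}}$ with $r_{\ur, p} \geq 2$. The hypothesis $t = \tau^{2}$ is pivotal because it makes the defining equation $y^{2} = x^{3} - (s^{4}+\tau^{4})x$ symmetric under $s \leftrightarrow \tau$; in other words, $E_{s, \tau^{2}}$ and $E_{\tau, s^{2}}$ are literally the same elliptic curve over $\Q$. This supplies two ``obvious'' rational points
\[
    P_{1} := (-s^{2}, s\tau^{2}), \qquad P_{2} := (-\tau^{2}, s^{2}\tau),
\]
one arising from each labeling. Moreover, the symmetric congruence $s\tau \equiv 0 \pmod{p^{2}}$ entails both $s\tau^{2} \equiv 0 \pmod{p^{2}}$ and $s^{2}\tau \equiv 0 \pmod{p^{2}}$, so the main local hypothesis of Theorem \ref{main_result} (with $n = 1$) holds simultaneously under both labelings.

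The first step is to check the hypotheses of Theorems \ref{Dainobu_criterion} and \ref{main_result} in our context. The fourth-power-freeness of $s^{4}+\tau^{4}$ forces $\gcd(s, \tau) = 1$, so together with $s\tau \equiv 0 \pmod{p^{2}}$ and $p \geq 5$ we have $p \nmid s^{4}+\tau^{4}$, hence good reduction at $p$. The $j$-invariant equals $1728$, with trivial denominator. The vanishing of $H^{1}(\Q(E[p])/\Q, E[p])$ follows from the irreducibility hypothesis via the standard argument that the $p$-part of $\#\GL_{2}(\F_{p})$ is $p$. Assuming $s\tau \neq 0$ (otherwise one of $P_{1}, P_{2}$ is $2$-torsion, a case that need not be addressed), Fermat's classical theorem on $x^{4}+y^{4}=z^{2}$ rules out $s^{4}+\tau^{4}$ being a perfect square. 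Thus Theorem \ref{main_result} applies in both labelings and shows that each of $P_{1}$ and $P_{2}$ individually defines a non-trivial cyclic subgroup of $E(\Q) \cap pE(\Q_{p}) / pE(\Q)$.

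The main obstacle is to upgrade these two individually non-trivial classes to genuine $\F_{p}$-linear independence in $E(\Q)/pE(\Q)$, i.e.\ to prove $r_{\ur, p} \geq 2$. My plan is to run a $2$-descent argument in the spirit of Fujita-Terai on the $2$-isogenous pair $E$ and $E' : y^{2} = x^{3} + 4(s^{4}+\tau^{4})x$. Although the descent map $\alpha : E(\Q)/\hat\phi E'(\Q) \hookrightarrow \Q^{\times}/\Q^{\times 2}$ sends both $P_{1}$ and $P_{2}$ to the same class $-1 \cdot \Q^{\times 2}$, this only yields $P_{1} - P_{2} = \hat\phi(R)$ for some $R \in E'(\Q)$; computing the class $\alpha'(R)$ under the dual descent map and using the coprimality $\gcd(s, \tau) = 1$ together with the fourth-power-freeness of $s^{4}+\tau^{4}$, one shows $\alpha'(R) \neq 1$, hence $P_{1} - P_{2} \notin 2E(\Q)$. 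Combined with the vanishing $E(\Q)[p] = 0$ forced by the irreducibility of $E[p]$ and a descent-theoretic bookkeeping ensuring that the index $[E(\Q)/\tors : \Z P_{1} + \Z P_{2}]$ is coprime to $p$, this promotes the $\Z$-independence to the required $\F_{p}$-linear independence.

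Once $r_{\ur, p} \geq 2$ is in hand, Theorem \ref{Dainobu_multiplicity} directly produces the surjective $\Gal(\Q(E[p])/\Q)$-equivariant homomorphism onto $E_{s, \tau^{2}}[p]^{\oplus 2}$, and the $p^{4}$-divisibility of $\#\Cl(\Q(E_{s, \tau^{2}}[p]))$ follows since $\#E_{s, \tau^{2}}[p]^{\oplus 2} = p^{4}$.
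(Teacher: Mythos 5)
Your proposal does not address the statement it is supposed to prove. \cref{Dainobu_multiplicity} is a \emph{general} assertion about an arbitrary elliptic curve $E$ satisfying the hypotheses of \cref{Dainobu_criterion}: assuming $n=1$ and the irreducibility of $E[p]$, it upgrades the length bound of \cref{Dainobu_criterion} to the statement that $E[p]$ occurs with multiplicity at least $r_{\ur, p}$ in the semisimplification of $\Cl(\Q(E[p]))/p\Cl(\Q(E[p]))$. What you have written is instead an argument for \cref{refine_Fujita-Terai}, i.e.\ the application to the special family $E_{s,\tau^{2}}$, and your final step explicitly invokes \cref{Dainobu_multiplicity} as a black box (``Once $r_{\ur,p}\geq 2$ is in hand, Theorem~\ref{Dainobu_multiplicity} directly produces the surjective homomorphism''). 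As a proof of \cref{Dainobu_multiplicity} this is circular, and the entire discussion of the points $P_{1}$, $P_{2}$, the $2$-descent, and the index bookkeeping is beside the point: none of it bears on why irreducibility of $E[p]$ converts a lower bound on $\length_{\Z_{p}}\Hom_{\Gal(\Q(E[p])/\Q)}(\Cl(\Q(E[p])), E[p])$ into a multiplicity statement about the semisimplification.

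For the record, the present paper gives no proof of \cref{Dainobu_multiplicity} either; it is imported from \cite{Dainobu_preparation}, and only a sketch of \cref{Dainobu_criterion} is provided in the surrounding remark. The missing content that a proof would have to supply is roughly this: by \cref{Dainobu_criterion}, the composite of the Kummer map and restriction identifies $E(\Q)_{\ur, p}/pE(\Q)$ (of $\F_{p}$-dimension $r_{\ur, p}$) with a subspace of the unramified part of $\Hom_{\Gal(\Q(E[p])/\Q)}(\Gal(\overline{\Q}/\Q(E[p])), E[p])$, that is, of $\Hom_{\Gal(\Q(E[p])/\Q)}(\Cl(\Q(E[p]))/p\Cl(\Q(E[p])), E[p])$; one then uses Schur's lemma for the irreducible module $E[p]$ to relate the size of this Hom-space to the number of copies of $E[p]$ in the cosocle (hence in the semisimplification) of $\Cl(\Q(E[p]))/p\Cl(\Q(E[p]))$, taking care of the possible nontriviality of $\End_{\Gal(\Q(E[p])/\Q)}(E[p])$. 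Your proposal contains no trace of this argument, so it cannot be accepted as a proof of the stated theorem.
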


In application,
the following corollary of \cref{Dainobu_criterion} is valuable.

\begin{corollary} [{\cite{Dainobu_preparation}, see also \cite[Lemma 2.10]{Ohshita}}] \label{Dainobu_corollary}
Assume the same conditions as \cref{Dainobu_criterion}.
\begin{enumerate}
\item
Suppose that $E$ has a Weirstrass equation $y^{2} = x^{3}+ax+b$ over $\Z_{p}$
and there exists a rational point $P \in E(\Q) \setminus p^{n}E(\Q)$ and a divisor $f_{0}$ of $\#E(\F_{p})$ prime to $p$
such that $x(f_{0}P) \not\in \Z_{p}$ and $x(f_{0}P)/y(f_{0}P) \in p^{n}\Z_{p}$.
Then, the class number of $\Q(E[p^{n}])$ is divisible by $p$.

\item
Suppose that $E[p]$ is an irreducible $\F_{p}[\Gal(\Q(E[p])/\Q)]$-module.
Then, the class number of $\Q(E[p^{n}])$ is divisible by $p^{2r_{\ur, p^{n}}}$.
\end{enumerate}
\end{corollary}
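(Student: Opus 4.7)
The plan is to deduce both parts of the corollary from \cref{Dainobu_criterion} and \cref{Dainobu_multiplicity} (together with its higher-$n$ extension from \cite{Dainobu_preparation}); in both cases the technical heart is to verify $r_{\ur,p^n}\geq 1$ using the given rational point $P$.

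For part (1), I would exhibit $P$ as an element of $E(\Q)_{\ur,p^n}\setminus p^n E(\Q)$, which already forces $r_{\ur,p^n}\geq 1$. The hypothesis $P\notin p^n E(\Q)$ supplies one half; by \cref{Dainobu_criterion} the remaining task is to verify the $p$-local condition $P\in p^n E(\Q_p)$. Since $\gcd(f_0,p)=1$, multiplication by $f_0$ is an automorphism of $E(\Q_p)/p^n E(\Q_p)$, so it suffices to check $f_0P\in p^n E(\Q_p)$. This is a formal-group computation at $p$: the condition $x(f_0P)\notin\Z_p$ forces $f_0P$ to reduce to the identity mod $p$, placing it in the kernel of reduction $E_1(\Q_p)=\hat{E}(p\Z_p)$, and the condition $v_p(x(f_0P)/y(f_0P))\geq n$ says the formal parameter $-x/y$ has valuation at least $n$, pushing $f_0 P$ into the deeper filtration piece $\hat{E}(p^n\Z_p)$. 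For $p\geq 5$ with good reduction, the formal logarithm $\log_{\hat{E}}\colon\hat{E}(p\Z_p)\xrightarrow{\sim} p\Z_p$ is an isomorphism of filtered $\Z_p$-modules, which translates this formal-group depth into the required $p^n$-divisibility in $E(\Q_p)$. Once $r_{\ur,p^n}\geq 1$ is in hand, \cref{Dainobu_criterion} produces a nonzero Galois-equivariant homomorphism $\Cl(\Q(E[p^n]))\to E[p^n]$, whose image is a nontrivial $p$-group, and hence $p\mid\#\Cl(\Q(E[p^n]))$.

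For part (2), under the additional irreducibility hypothesis I would invoke \cref{Dainobu_multiplicity}, or more precisely its $p^n$-level extension from \cite{Dainobu_preparation}. The $\F_p$-irreducibility of $E[p]$ forces the Galois-stable $\Z_p$-submodules of $E[p^n]$ to form the single chain $\{0\}\subsetneq E[p]\subsetneq\cdots\subsetneq E[p^n]$, so every nonzero Galois-equivariant homomorphism $\Cl(\Q(E[p^n]))\to E[p^n]$ has image containing $E[p]$, of $\Z_p$-length at least $2$. Combined with the length bound $\length_{\Z_p}\Hom_{\Gal(\Q(E[p^n])/\Q)}(\Cl(\Q(E[p^n])),E[p^n])\geq r_{\ur,p^n}$ supplied by \cref{Dainobu_criterion}, this upgrades to a Galois-equivariant surjection from a suitable semisimplified $p$-quotient of $\Cl(\Q(E[p^n]))$ onto $E[p]^{\oplus r_{\ur,p^n}}$, yielding the $p^{2r_{\ur,p^n}}$-divisibility. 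The main obstacle is accordingly the formal-group computation in part (1): it is precisely there that $p\geq 5$ enters essentially via the isomorphism $\log_{\hat{E}}$, and the coprimality $\gcd(f_0,p)=1$ together with $f_0\mid\#E(\F_p)$ is what permits transferring the resulting divisibility from $f_0P$ back to $P$ itself.
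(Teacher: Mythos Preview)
The paper does not prove this corollary itself; it is quoted from \cite{Dainobu_preparation}, with only the brief gloss in the subsequent Remark, so there is no in-paper proof to compare against. Your strategy for part (1) is exactly the formal-group computation the paper carries out in its own application, \cref{unramified_at_p}.

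There is, however, an off-by-one in your formal-group step. From $v_p\bigl(-x(f_0P)/y(f_0P)\bigr)\geq n$ you correctly place $f_0P$ in $\hat E(p^{n}\Z_p)$, but under $\log_{\hat E}\colon\hat E(p\Z_p)\xrightarrow{\sim}p\Z_p$ that subgroup equals $p^{n-1}\hat E(p\Z_p)=p^{n-1}E_1(\Q_p)$, which yields only $f_0P\in p^{n-1}E(\Q_p)$, not $p^{n}E(\Q_p)$. The paper's own use of the corollary confirms this shift: both in the checklist at the end of \S2 and in the proof of \cref{unramified_at_p}, the authors verify $v_p\bigl(x(2P_{s,t})/y(2P_{s,t})\bigr)\geq n+1$ (not $\geq n$) precisely in order to conclude $2P_{s,t}\in p^{n}E_{s,t}(\Q_p)$. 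So either the stated hypothesis is meant to read $p^{n+1}\Z_p$, or an additional argument is needed; in particular, the condition $f_0\mid\#E(\F_p)$ that you single out at the end plays no role in your sketch and does not supply the missing factor of $p$ (only $\gcd(f_0,p)=1$ is used to transfer divisibility from $f_0P$ back to $P$). For part (2), note also that \cref{Dainobu_multiplicity} is stated in this paper only for $n=1$; deducing $p^{2r_{\ur,p^n}}$-divisibility for general $n$ from the length inequality of \cref{Dainobu_criterion} under irreducibility of $E[p]$ requires a genuine representation-theoretic count of copies of $E[p]$ in a Jordan--H\"older filtration of $\Cl\otimes\Z_p$, which your sketch gestures at but does not pin down.
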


\begin{remark}
\begin{enumerate}
\item
In \cite{Lawson-Wuthrich},
the triples $(E, p, n)$ for which the third condition
\[
	H^{1}(\Q(E[p^{n}])/\Q, E[p^{n}]) = 0
\]
of \cref{Dainobu_criterion} fails
are classified in terms of isogeny on $E$ of degree $p$.
In particular, the third condition holds for all $(E, p, n)$ such that $p \geq 13$.
Moreover it holds for all $(E, 11, n)$ (resp.\ $(E, 7, n), (E, 5, n)$)
whenever $E$ is not 121c1 nor 121c2 in Cremona's table \cite{Cremona_text}
(resp.\ $E(\Q)[7] = 0$, $E[5]$ is an irreducible $\F_{5}[\Gal(\overline{\Q}/\Q)]$-module and the quadratic twist of $E$ by $D = 5$ has no rational points of order 5).

\item
For the convenience of the reader,
we give a sketch of the proof of \cref{Dainobu_criterion}.
As we have already seen,
the third condition is equivalent to say that the restriction map
\begin{align*}
	H^{1}(\Q, E[p^{n}])
	&\xrightarrow{\res} H^{1}(\Q(E[p^{n}]), E[p^{n}])^{\Gal(\Q(E[p^{n}])/\Q)} \\
	&\simeq \Hom_{\Gal(\Q(E[p^{n}])/\Q)}(\Gal(\overline{\Q}/\Q(E[p^{n}])), E[p^{n}])
\end{align*}
is injective.
The first condition ensures that
the image of the set $E(\Q)_{\ur, n}$ by the mod $p^{n}$ Kummer map $\kappa : E(\Q) \to H^{1}(\Q, E[p^{n}])$ consists of 1-cocycle classes unramified at $p$.
Finally, the second condition ensures that
every 1-cocycle classes in $H^{1}(\Q, E[p^{n}])$ is unramified at every place $v \neq p$.
\footnote{
	More precisely,
	every 1-cocycle is unramified at the infinite place
	because $p$ is odd.
	}

\item
\cref{Dainobu_multiplicity} means that
if one decomposes the semi-simplification of
the $\F_{p}[\Gal(\Q(E[p])/\Q)]$-module $\Cl(\Q(E[p]))/p\Cl(\Q(E[p]))$ into its irreducible components,
then the module $E[p]$ appears at least $r_{\ur, 1}$-times.
Therefore, we can obtain an explicit lower bound for the multiplicity of this special component contained in our object $\Cl(\Q(E[p]))/p\Cl(\Q(E[p]))$
as the name of \cref{Dainobu_multiplicity} suggests.

\item
\cref{Dainobu_corollary}(1) gives a useful condition
to prove the $p$-divisibility of the class number of $\Q(E[p^{n}])$
if one knows a specific rational point on $E$.
On the other hand,
\cref{Dainobu_corollary}(2) gives a useful condition
to prove that the class number of $\Q(E[p^{n}])$ is divisible by a higher power of $p$
if one knows a better lower bound for $r_{\ur, p^{n}}$.
\end{enumerate}
\end{remark}

Now, let us explain what the above theorems give for the proof of \cref{main_result}.
Since we assume that $st \equiv 0 \bmod{p}$ and $s^{4}+t^{2}$ is square-free,
we see that $s^{4}+t^{2}$ is prime to $p$.
Hence, the first condition holds for $(E, p, n) = (E_{s, t}, p, 1)$ because
$E_{s, t}$ has good reduction outside $2(s^{4}+t^{2})$.
Moreover, the second condition holds for our elliptic curves $E_{s, t}$ because $j(E_{s, t}) = 2^{6} \cdot 3^{3}$.
Finally, we can verify that $H^{1}(\Q, E_{s, t}[p^{n}]) = 0$
by checking the following facts.
\begin{enumerate}
\item
For $p = 11$, $E_{s, t} \not\simeq \mathrm{121c1, 121c2}$ in Cremona's table \cite{Cremona_text}.

\item
For $p = 7$, $E_{s, t}(\Q)[7] = 0$ by \cite[Ch.\ X, Proposition 6]{Silverman_AEC}.

\item
For $p = 5$, $E_{s, t}[5]$ is an irreducible $\F_{5}[\Gal(\Q(E_{s, t}[5])/\Q)]$-module by \cite[Theorem 7]{DGJJU}, and the quadratic twist of $E_{s, t}$ by $D = 5$ is given by $y^2 = x^{3}-25(s^{4}+t^{2})x$ and has no rational 5-torsion point by \cite[Ch.\ X, Proposition 6]{Silverman_AEC}.
\end{enumerate}
Thus, thanks to \cref{Dainobu_corollary}(1),
it is sufficient for the proof of \cref{main_result} to check the followings.
\begin{enumerate}
\item
If $s^{4}+t^{2}$ is fourth-power-free and not a square,
then an obvious rational point $P_{s, t} := (-s^{2}, st) \in E_{s, t}(\Q)$ does not lie in the subgroup $pE_{s, t}(\Q)$.

\item
If $p$ divides exactly one of $s$ and $t$ and $p^{n+1}$ divides $st$,
then $x(2P_{s, t}) \not\in p^{n+1}\Z_{p}$ and $x(2P_{s, t})/y(2P_{s, t}) \in p^{n+1}\Z_{p}$.
Here, note that since the group $E_{s, t}(\F_{p})$ of the modulo $p$ rational points contains a 2-torsion point $(0, 0)$, the order $\#E_{s, t}(\F_{p})$ is even.
\end{enumerate}

In the next section,
we verify these conditions.

\section{Proof of \cref{main_result,refine_Fujita-Terai}}

\subsection{Primitivity of the obvious rational point}

The goal of this subsection is to prove the following:

\begin{theorem} [{cf.\ \cite[Theorem 10.1]{Duquesne_quartic}, \cite[Theorem 1.5(1)]{Fujita-Terai}}] \label{primitivity}
Let $E_{s, t}$ be an elliptic curve defined by $y^{2} = x^{3}-(s^{4}+t^{2})x$ with $s, t \in \Z_{\geq 1}$
and $P_{s, t} := (-s^{2}, st)$ be a rational point on $E_{s, t}$.
Suppose that $s^{4}+t^{2}$ is forth-power free and not a square.
Then, the rational point $P_{s, t}$ can be extended to a minimal system of generators of $E_{s, t}(\Q)$.
\end{theorem}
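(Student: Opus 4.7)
My plan is to reduce the claim to showing that $P_{s,t}$ has primitive image in the torsion-free quotient $E_{s,t}(\Q)/E_{s,t}(\Q)_{\mathrm{tors}}$: a single element of a finitely generated abelian group extends to a minimal system of generators if and only if this image is not divisible by any prime, equivalently, if and only if for every prime $m$ and every $T \in E_{s,t}(\Q)_{\mathrm{tors}}$ one has $P_{s,t}+T \notin m E_{s,t}(\Q)$. As a preliminary, I would pin down the torsion: because $s^4+t^2$ is not a square, the nontrivial $2$-torsion points $(\pm\sqrt{s^4+t^2},0)$ are irrational, so $(0,0)$ is the unique rational $2$-torsion point; reduction at a small prime of good reduction, together with Mazur's classification and the CM-by-$\Z[i]$ structure of $E_{s,t}$, then yields $E_{s,t}(\Q)_{\mathrm{tors}}=\{O,(0,0)\}\cong\Z/2\Z$. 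For odd $m$ one has $(0,0)\in m E_{s,t}(\Q)$ automatically, so the two choices of $T$ give distinct conditions only when $m=2$.

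For $m=2$, I would perform classical $2$-descent via the $2$-isogeny $\phi\colon E_{s,t}\to E'_{s,t}$ with kernel $\{O,(0,0)\}$, where $E'_{s,t}\colon y^2=x^3+4(s^4+t^2)x$. The standard descent homomorphism
\[
\alpha\colon E_{s,t}(\Q)/\hat\phi(E'_{s,t}(\Q))\hookrightarrow\Q^\times/(\Q^\times)^2,\qquad (x,y)\mapsto x \ (x\neq 0),\quad (0,0)\mapsto -(s^4+t^2),
\]
evaluates to $\alpha(P_{s,t})\equiv -1$ and $\alpha(P_{s,t}+(0,0))\equiv s^4+t^2\pmod{(\Q^\times)^2}$. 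Both classes are nontrivial (the second precisely by the not-a-square hypothesis), so neither point lies in $\hat\phi(E'_{s,t}(\Q))\supseteq 2E_{s,t}(\Q)$. This settles the case $m=2$.

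For odd primes $m$, I would appeal to the recent work of \cite{Voutier-Yabuta_quartic} mentioned in the introduction, which, under the fourth-power-free and not-a-square hypotheses on $s^4+t^2$, ensures $P_{s,t}\notin mE_{s,t}(\Q)$. This is the main obstacle of the argument: the explicit $2$-descent above does not transfer to odd $m$, since the corresponding Kummer pairing then takes values over the $m$-division field of $E_{s,t}$ rather than over $\Q$, and one must instead rely on the global height- and descent-theoretic analysis of Voutier-Yabuta, where the fourth-power-free hypothesis plays its essential role. Combining the three inputs, $P_{s,t}$ has primitive image modulo torsion, and hence extends to a minimal system of generators of $E_{s,t}(\Q)$.
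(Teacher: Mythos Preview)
Your reduction to primitivity modulo torsion and your identification of $E_{s,t}(\Q)_{\mathrm{tors}}=\{O,(0,0)\}$ are correct, and your $m=2$ argument via the $2$-isogeny descent map $\alpha$ is valid and in fact slightly slicker than the paper's approach: the paper instead computes $x(2Q)$ directly from the duplication formula and derives a contradiction from the fact that neither $-s^{2}$ nor $(s^{4}+t^{2})/s^{2}$ can be a rational square.

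The genuine gap is in your odd-$m$ case. The theorem of Voutier--Yabuta that the paper invokes is \emph{only} a lower bound for the canonical height of an arbitrary non-torsion point on $y^{2}=x^{3}+ax$ with $a$ fourth-power-free, roughly $\hat h(Q)>\tfrac{1}{16}\log|a|+c$ for an explicit constant $c$ depending on $a\bmod 64$; it says nothing directly about $P_{s,t}$ or about divisibility in the Mordell--Weil group, and it involves no descent. The sentence in the introduction you are paraphrasing is announcing what the present paper will \emph{deduce} from Voutier--Yabuta, not what Voutier--Yabuta itself proves. The argument you are missing is a height comparison: if $P_{s,t}=mQ+\delta(0,0)$ with $Q$ non-torsion, then $\hat h(P_{s,t})=m^{2}\hat h(Q)$, so one also needs an \emph{upper} bound for $\hat h(P_{s,t})$. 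This comes from Silverman's explicit estimate for $\hat h-\tfrac{1}{2}h$ together with the elementary bound $h(P_{s,t})=\log s^{2}\le\tfrac{1}{2}\log(s^{4}+t^{2})$; combining the two inequalities yields $m^{2}<9$, hence $m\le 2$, after which the $m=2$ case (handled separately) finishes the proof. As written, your odd-$m$ step is effectively an appeal to the very conclusion you are asked to establish.
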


\begin{remark}
If $s^{4}+t^{2}$ is a prime number congruent to $9 \bmod{16}$,
then $E_{s, t}(\Q)$ is generated by $(0, 0)$ and $P_{s, t}$.
For the proof, see \S5.
\end{remark}

The proof of \cref{primitivity} is based on some inequalities on height of rational points on $E_{s, t}$.
Before the proof,
we recall these inequalities with some terminologies on height.

First, we recall two kinds of \emph{heights} of a rational point $P$ on an elliptic curve $E$.

Suppose that $E$ is defined by $y^{2} = x^{3}+ax+b$ with $a, b \in \Z$.
Let $n, d \in \Z$ such that $x(P) = n/d$ and $\gcd(n, d) = 1$.
Then, the \emph{absolute logarithmic height} $h(n/d)$
of the rational number $n/d$ is defined by
\[
	h(n/d)
	:= \sum_{\text{$v$ : place of $\Q$}} \log^{+}\left| n/d \right|_{v}
	= \log\max\left( \left| n \right|, \left| d \right| \right),
\]
where $\log^{+}(\alpha) := \log\max\{ 1, \alpha \}$,
and the \emph{naive} (or \emph{Weil}) \emph{height} $h(P)$ of the rational point $P$ is defined by
\[
	h(P) := h(x(P)) = h(n/d).
\]
The naive height of a given point is easy to calculate,
but it is non-canonical
in the sense that it depends on a fixed defining equation of $E$.

On the other hand,
it is known \cite[Proposition 9.1]{Silverman_AEC} that the limit
\[
	\hat{h}(P)
	= \frac{1}{\deg(x)}\lim_{k \to \infty} \frac{h(kP)}{k^{2}}
	= \frac{1}{2}\lim_{n \to \infty} \frac{h(2^{n}P)}{4^{n}}
\]
exists and defines a positive definite quadratic form on the $\R$-vector space $\R \otimes E(\Q)$.
We call the function $\hat{h} : E(\Q) \to \R$ as the \emph{canonical} (or \emph{N\'eron-Tate}) \emph{height} on $E$.

\begin{remark}
Here, we follow the definition of the canonical height in \cite[Ch.\ VIII.6]{Silverman_AEC} and \cite{Silverman_difference}.
It should be remarked that
several authors use the other normalization of the canonical height.
For example,
in \cite{Cremona-Prickett-Siksek,Duquesne_quartic,Fujita-Terai},
the canonical height is defined as the twice of $\hat{h}$.
For such a discordance of the ``canonical" heights (and of the local heights),
the contents of \cite[\S4]{Cremona-Prickett-Siksek} is worth reading.
\end{remark}

The following inequalities give bounds for the difference of these two kinds of heights.

\begin{theorem} [{\cite[Theorem 1.1]{Silverman_difference}, \cite[Proposition 8.1]{Duquesne_quartic}}] \label{Silverman_inequality}
Let $E$ be an elliptic curve defined by $y^{2} = x^{3}+ax+b$ with $a, b \in \Z$,
$\Delta(E)$ be its discriminant, $j(E)$ be its $j$-invariant,
and $P \neq \infty$ be a rational point on $E$.
Then, the following inequalities hold:
\[
	-\frac{h(j(E))}{8}-\frac{h(\Delta(E))}{12}-0.973
	\leq \hat{h}(P)-\frac{1}{2}h(P)
	\leq \frac{h(j(E))}{12}+\frac{h(\Delta(E))}{12}+1.07.
\]
In particular,
if $a = -(s^{4}+t^{2})$ with $s, t \in \Z$ and $b = 0$, 
then we have the upper bound for the canonical height of $P$ as follows:
\[
	\hat{h}(P) \leq \frac{1}{2}h(P)+\frac{1}{4}\log(s^{4}+t^{2})+2.03781.
\]
\end{theorem}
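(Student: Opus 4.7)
The two-sided inequality before the ``In particular'' clause is exactly Silverman's comparison of the naive and canonical heights, so the plan is simply to quote \cite{Silverman_difference} (or, equivalently, the explicit restatement in \cite[Proposition 8.1]{Duquesne_quartic}) for that step and to focus the proof on deriving the specialized upper bound in the second part.

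The plan for the specialization is a direct computation. First, I would write down the standard invariants of the short Weierstrass form $y^{2}=x^{3}+ax+b$: one has $\Delta(E)=-16(4a^{3}+27b^{2})$ and $j(E)=-1728(4a)^{3}/\Delta(E)$. Substituting $a=-(s^{4}+t^{2})$ and $b=0$ gives
\[
\Delta(E_{s,t})=64(s^{4}+t^{2})^{3},\qquad j(E_{s,t})=1728.
\]
Hence $h(j(E_{s,t}))=\log 1728$ is a constant, and $h(\Delta(E_{s,t}))=\log 64+3\log(s^{4}+t^{2})$, since $s^{4}+t^{2}\geq 1$.

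Plugging these two values into the upper estimate $\hat h(P)-\tfrac{1}{2}h(P)\leq \tfrac{1}{12}h(j(E))+\tfrac{1}{12}h(\Delta(E))+1.07$ and combining the $\log(s^{4}+t^{2})$ contributions yields
\[
\hat h(P)-\tfrac{1}{2}h(P)\leq \tfrac{1}{12}\log(1728\cdot 64)+\tfrac{1}{4}\log(s^{4}+t^{2})+1.07.
\]
A routine numerical verification shows $\tfrac{1}{12}\log(110592)+1.07<2.03781$, which gives exactly the stated bound.

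The only subtlety to watch is the normalization of the canonical height: the bound from \cite{Silverman_difference} uses the convention $\hat h=\tfrac{1}{2}\lim_{n\to\infty}h(2^{n}P)/4^{n}$ fixed right after the definition, whereas \cite{Duquesne_quartic,Fujita-Terai} normalize by a factor of $2$. I would make this matching of conventions explicit before inserting the numerical constants, since otherwise the $2.03781$ would be off by a factor of two. Beyond that, the argument is entirely mechanical and the ``hard part,'' namely Silverman's inequality itself, is used as a black box.
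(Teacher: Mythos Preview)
Your proposal is correct and matches the paper's approach: the paper itself gives no proof for this theorem, simply citing the general two-sided inequality from \cite{Silverman_difference} and the specialized upper bound from \cite{Duquesne_quartic}, and your computation of $\Delta(E_{s,t})=64(s^{4}+t^{2})^{3}$, $j(E_{s,t})=1728$, and $\tfrac{1}{12}\log(1728\cdot 64)+1.07\approx 2.03781$ is exactly the routine verification that underlies the ``In particular'' clause.
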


Therefore, if we obtain an explicit lower bound for the canonical height,
then we can obtain an explicit lower bound for the naive height.

\begin{theorem} [{\cite[Theorem 1.2]{Voutier-Yabuta_quartic}}] \label{Voutier-Yabuta}
Let $E_{a}$ be an elliptic curve defined by $y^{2} = x^{3}+ax$ with a forth-power free integer $a$
and $Q \in E_{a}(\Q)$ be a non-torsion point.
Then, we have the following estimate:
\[
	\hat{h}(Q)
	> \frac{1}{16}\log\left| a \right|
	+ \begin{cases}
	\frac{1}{2}\log2 & \text{if $a > 0$ and $a \equiv 1, 5, 7, 9, 13, 15 \bmod{16}$} \\
	\frac{1}{4}\log2 & \text{if $a > 0$ and either $a \equiv 20, 36 \bmod{64}$,} \\
				& \text{or $a \equiv 2, 3, 6, 8, 10, 11, 12, 14 \bmod{16}$} \\
	-\frac{1}{8}\log2 & \text{if $a > 0$ and $a \equiv 4, 52 \bmod{64}$} \\
	\frac{9}{16}\log2 & \text{if $a < 0$ and $a \equiv 1, 5, 7, 9, 13, 15 \bmod{16}$} \\
	\frac{5}{16}\log2 & \text{if $a < 0$ and either $a \equiv 20, 36 \bmod{64}$} \\
				& \text{or $a \equiv 2, 3, 6, 8, 10, 11, 12, 14 \bmod{16}$} \\
	-\frac{1}{16}\log2 & \text{if $a < 0$ and $a \equiv 4, 52 \bmod{64}$}
	\end{cases}
\]
\end{theorem}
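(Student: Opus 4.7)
The plan is to prove the lower bound through the N\'eron decomposition of the canonical height
\[
	\hat{h}(Q) = \sum_{v} \hat{\lambda}_{v}(Q),
\]
where $v$ runs over all places of $\Q$ and $\hat{\lambda}_{v}$ denotes the local N\'eron height at $v$, and then to bound each local contribution from below using the explicit geometry of the curves $E_{a} : y^{2} = x^{3}+ax$. Because the $j$-invariant of $E_{a}$ is always $1728$, these curves have only two types of bad primes: the odd primes $p \mid a$ and the prime $2$.

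First, at places $v$ of good reduction (i.e., $v \nmid 2a$ and $v = \infty$), one uses the standard formula $\hat{\lambda}_{v}(Q) = \tfrac{1}{2}\log\max(1, |x(Q)|_{v})$, which is nonnegative, and one shows that these contributions aggregate to essentially $\tfrac{1}{2}h(Q)$ up to a bounded correction controlled by the archimedean local height. Second, at each odd prime $p \mid a$, because $a$ is fourth-power-free one has $v_{p}(a) \in \{1, 2, 3\}$; Tate's algorithm applied to $y^{2} = x^{3}+ax$ pins down the Kodaira symbol and the component group $\Phi_{p}$ in each case, so Silverman's explicit tables for local heights on curves of N\'eron type $\mathrm{III}$, $\mathrm{I}_{0}^{*}$, $\mathrm{III}^{*}$ give an exact minimum for $\hat{\lambda}_{p}$ on $E_{a}(\Q_{p}) \setminus \{O\}$, proportional to $v_{p}(a)\log p$. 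Summing these contributions over all odd primes $p \mid a$ produces the dominant term $\tfrac{1}{16}\log|a|$ in the stated bound.

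Third, and this is where the genuine work lies, the prime $2$ must be treated separately because $y^{2} = x^{3}+ax$ is typically not $2$-minimal. One runs Tate's algorithm in each residue class of $a$ modulo $16$ (and modulo $64$ in the delicate cases $a \equiv 4, 20, 36, 52 \bmod{64}$), tracking how successive admissible changes of variables transform $a$ and what Kodaira type is produced; for each branch one computes the minimal value of $\hat{\lambda}_{2}$ attained on the N\'eron component containing the reduction of a non-torsion point. This analysis at $2$ is precisely what generates the seven cases in the statement, and its sign distinguishes the regimes $a > 0$ and $a < 0$ via the real component structure of $E_{a}(\R)$, which also contributes to the archimedean local height through the elliptic logarithm.

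Finally, to convert the sum of local minima into a uniform lower bound, one notes that for any non-torsion $Q$ at least one coordinate is non-integral at some place; combined with the quadratic nature of $\hat{h}$ and the standard descent via $2Q$, $4Q$, etc., this forces the sum of local contributions to attain its theoretical minimum. The main obstacle is the exhaustive Tate-algorithm bookkeeping at $2$: the many subcases, the need to identify when $y^{2} = x^{3}+ax$ admits a further simplification of the form $x \mapsto 4x + c$, and the careful matching of the archimedean correction in the negative-$a$ case, which is responsible for the shift from $\tfrac{1}{2}\log 2$ to $\tfrac{9}{16}\log 2$ between the corresponding branches of the bound.
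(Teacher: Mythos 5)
You should first be aware that the paper does not prove this statement at all: it is imported verbatim from Voutier--Yabuta (cited as \cite[Theorem 1.2]{Voutier-Yabuta_quartic}) and used as a black box in the proof of the primitivity theorem. So there is no internal proof to compare against; what follows is an assessment of your reconstruction on its own terms.

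Your starting point --- the decomposition $\hat{h}(Q) = \sum_{v}\hat{\lambda}_{v}(Q)$ together with Tate's algorithm at the bad places --- is indeed the framework used in results of this type, but your accounting of where the main term $\tfrac{1}{16}\log|a|$ comes from is wrong, and this is a genuine gap rather than a presentational one. At the odd primes $p \mid a$ the curve has additive reduction of type $\mathrm{III}$, $\mathrm{I}_{0}^{*}$ or $\mathrm{III}^{*}$, and the minimal value of the local height on the components met by rational points is \emph{nonpositive} (in Silverman's normalization it is a negative multiple of $\log p$ on the non-identity components and $0$ on the identity component). Summing these minima over $p \mid a$ therefore cannot ``produce the dominant term $\tfrac{1}{16}\log|a|$''; it can only subtract from the bound. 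The positive growth in $|a|$ must be extracted at the archimedean place, and the mechanism --- entirely absent from your sketch --- is the duplication formula: for $y^{2} = x^{3}+ax$ one has $x(2Q) = \bigl((x(Q)^{2}-a)/(2y(Q))\bigr)^{2}$, and a short computation using $y^{2} = x(x^{2}+a)$ shows that $x$ of a suitable multiple of $Q$ (in practice $2Q$ or $4Q$, after forcing it onto the identity component at every bad place) is bounded below in terms of $\sqrt{|a|}$, whence $\hat{\lambda}_{\infty}$ of that multiple is at least roughly $\tfrac{1}{4}\log|a|$. Feeding this back through $\hat{h}(Q) = \tfrac{1}{4}\hat{h}(2Q) = \tfrac{1}{16}\hat{h}(4Q)$ is precisely what yields the coefficient $\tfrac{1}{16}$ in the statement. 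Without this step, your final paragraph --- asserting that the quadraticity of $\hat{h}$ ``forces the sum of local contributions to attain its theoretical minimum'' --- is circular: nothing in the preceding paragraphs produces a positive lower bound at all, let alone one growing like $\log|a|$. The case analysis at $2$ and the remarks about the real locus are plausible in spirit but are the routine part of the argument; the doubling trick and the archimedean estimate are the part that actually carries the theorem.
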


Now, we can prove \cref{primitivity}.

\begin{proof} [Proof of \cref{primitivity}]
Since we assume that $s^{4}+t^{2}$ is not a square,
\cite[Ch.\ X, Proposition 6.1(a)]{Silverman_AEC} shows that
\[
	E_{s, t}(\Q)_{\tors} = \langle (0, 0) \rangle \simeq \Z/2\Z.
\]
Thus, it is sufficient to prove that
if there exists a rational point $Q \in E_{s, t}(\Q)$, a positive integer $n$, and $\delta \in \{ 0, 1 \}$ such that
\[
	P_{s, t} = nQ+\delta(0, 0),
\]
then $n = 1$.
Here, note that $Q \neq (0, 0), \infty$ because $s \neq 0$.

First, we prove that $n \neq 2$.
Indeed, if $n = 2$ and $\delta = 0$,
then the duplication formula implies that
\[
	-s^{2}
	= x(2Q)
	= \left( \frac{x(Q)^{2}+s^{4}+t^{2}}{2y(Q)} \right)^{2},
\]
which is impossible for $s \neq 0$.
On the other hand,
if $n = 2$ and $\delta = 1$,
then the addition formula implies that
\[
	-s^{2}
	= x(2Q+(0, 0))
	= -\frac{s^{4}+t^{2}}{x(2Q)},
	\quad \text{i.e.,} \quad
	\frac{s^{4}+t^{2}}{s^{2}}
	= x(2Q)
	= \left( \frac{x(Q)^{2}+s^{4}+t^{2}}{2y(Q)} \right)^{2},
\]
which is impossible for non-square $s^{4}+t^{2}$.

Next, since 
\[
	\frac{\hat{h}(P_{s, t})}{\hat{h}(Q)} = n^{2},
\]
it is sufficient to prove that the left hand side is smaller than 9.
Suppose that $s^{4}+t^{2}$ is odd.
Then, since $s^{4}+t^{2} \equiv 1 \bmod{8}$,
\cref{Silverman_inequality,Voutier-Yabuta} imply the desired bound
\[
	\frac{\hat{h}(P_{s, t})}{\hat{h}(Q)}
	\leq \frac{\frac{1}{4}\log(s^{4}+t^{2})+\frac{1}{2}\log(s^{2})+2.03781}{\frac{1}{16}\log(s^{4}+t^{2})+\frac{9}{16}\log2}
	< 9,
\]
where the second inequality follows from
\[
	\frac{1}{2}\log(s^{4}+t^{2})+2.03781
	< \frac{9}{16}\log(s^{4}+t^{2})+3.50905,
	\quad \text{i.e.,} \quad
	s^{4}+t^{2} > e^{-1.47124 \times 16}. 
\]
Suppose that $s^{4}+t^{2}$ is even.
If $s^{4}+t^{2} = 2$,
then we can check the assertion e.g.\ by using MAGMA command \texttt{Generators}.
If $s^{4}+t^{2} \neq 2$ and even,
then since we assume that $s^{4}+t^{2}$ is not a square,
we have $s^{4}+t^{2} \geq 6$.
Moreover, since we assume that $s^{4}+t^{2}$ is not divisible by $2^{4}$,
we see that $s^{4}+t^{2} \equiv 2 \bmod{4}$ or $s^{4}+t^{2} \equiv 4, 20 \bmod{32}$.
Therefore, \cref{Voutier-Yabuta} implies the desired bound
\[
	\frac{\hat{h}(P_{s, t})}{\hat{h}(Q)}
	\leq \frac{\frac{1}{4}\log(s^{4}+t^{2})+\frac{1}{2}\log(s^{2})+2.03781}{\frac{1}{16}\log(s^{4}+t^{2})+\frac{5}{16}\log2}
	< 9
\]
where the second inequality follows from
\[
	\frac{1}{2}\log(s^{4}+t^{2})+2.03781
	< \frac{9}{16}\log(s^{4}+t^{2})+1.94947,
	\quad \text{i.e.,} \quad
	s^{4}+t^{2} > e^{0.08834 \times 16} \approx 4.1.
\]
This completes the proof of \cref{primitivity}.
\end{proof}

\subsection{Unramifiedness at $p$}

The goal of this subsection is to prove the following:

\begin{proposition} \label{unramified_at_p}
Let $E_{s, t}$ be an elliptic curve defined by $y^{2} = x^{3}-(s^{4}+t^{2})x$ with $s, t \in \Z$,
$p$ be an odd prime number,
and $n$ be a positive integer.
Suppose that
\begin{enumerate}
\item
$p$ divides exactly one of $s$ and $t$.

\item
$p^{n+1}$ divides $st$.
\end{enumerate}
Then, $2P_{s, t} \in p^{n}E_{s, t}(\Q_{p})$.
\end{proposition}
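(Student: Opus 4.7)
The plan is to show that $2P_{s,t}$ lies deep inside the formal group of $E_{s,t}$ at $p$, and then invoke the standard structure theorem for the formal group of an elliptic curve with good reduction at an odd prime.

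First I would observe that the hypothesis forces $v_p(s^4)\neq v_p(t^2)$, so $v_p(s^4+t^2)=0$. In particular $p\nmid s^4+t^2$, and since $p$ is odd this means $E_{s,t}$ has good reduction at $p$. Next I would apply the duplication formula to the point $P_{s,t}=(-s^2,st)$ on the curve $y^2=x^3-(s^4+t^2)x$. A short calculation with the tangent slope $\lambda=(2s^4-t^2)/(2st)$ gives
$$x(2P_{s,t})=\left(\frac{2s^{4}+t^{2}}{2st}\right)^{2}.$$
Writing $k:=v_p(st)\ge n+1$, the same ``distinct valuations'' argument as above shows that $v_p(2s^4+t^2)=0$, so one reads off $v_p(x(2P_{s,t}))=-2k$. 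Substituting into the Weierstrass equation and using $v_p(s^4+t^2)=0$ then yields $v_p(y(2P_{s,t}))=-3k$, so the standard uniformizer $z=-x/y$ at the origin satisfies
$$v_p\bigl(z(2P_{s,t})\bigr)=k\ge n+1.$$
Equivalently, $2P_{s,t}$ lies in the subgroup $\widehat{E}_{s,t}(p^{n+1}\Z_p)$ of the formal group $\widehat{E}_{s,t}$ of $E_{s,t}$ at $p$.

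To finish, I would invoke the standard fact that for an elliptic curve with good reduction at an odd prime, the formal logarithm yields a $\Z_p$-module isomorphism $\widehat{E}_{s,t}(p^m\Z_p)\xrightarrow{\sim} p^m\Z_p$ for every $m\ge 1$, intertwining the multiplication-by-$p^n$ map with ordinary multiplication by $p^n$ (this is where the hypothesis $p$ odd is used, to guarantee convergence of $\log_{\widehat{E}_{s,t}}$ on $\widehat{E}_{s,t}(p\Z_p)$). Consequently $[p^n]:\widehat{E}_{s,t}(p\Z_p)\to\widehat{E}_{s,t}(p^{n+1}\Z_p)$ is a bijection, so $2P_{s,t}=[p^n](Q)$ for some $Q\in\widehat{E}_{s,t}(p\Z_p)\subseteq E_{s,t}(\Q_p)$, giving $2P_{s,t}\in p^n E_{s,t}(\Q_p)$ as required.

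I expect that the only place where anything nontrivial happens is the valuation book-keeping in the middle paragraph; no case-split on which of $s,t$ is divisible by $p$ should be needed, because the condition that exactly one of them is divisible by $p$ is precisely what forces $2s^4+t^2$ to be a $p$-adic unit, which in turn makes the valuations of $x(2P_{s,t})$ and $y(2P_{s,t})$ behave uniformly in $k$.
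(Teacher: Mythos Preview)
Your proof is correct and follows essentially the same route as the paper: compute $x(2P_{s,t})=\bigl((2s^4+t^2)/(2st)\bigr)^2$, read off $v_p(x)=-2k$ and $v_p(y)=-3k$ with $k=v_p(st)\ge n+1$, so that $v_p(-x/y)=k\ge n+1$, and then use the formal-group isomorphism $\Ker(E_{s,t}(\Q_p)\to E_{s,t}(\F_p))\simeq p\Z_p$ (Silverman, Ch.~IV, Theorem~6.4) to conclude. If anything, you are slightly more careful than the paper in explicitly noting that the hypothesis forces $v_p(s^4+t^2)=v_p(2s^4+t^2)=0$ (hence good reduction at $p$), which the paper's proof uses implicitly.
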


\begin{proof}
By the duplication formula,
we have
\[
	x(2P_{s, t})
	= \left( \frac{2s^{4}+t^{2}}{2st} \right)^{2}.
\]
Moreover, the assumptions implies that
\[
	v_{p}(x(2P_{s, t}))
	= -2v_{p}(st) \leq -2(n+1) < 0,
\]
hence
\[
	v_{p}(y(2P_{s, t}))
	= \frac{1}{2}v_{p}(x(2P_{s, t})^{3} - (s^{4}+t^{2})x(P_{2s, t}))
	= \frac{3}{2}v_{p}(x(2P_{s, t}))
	= -3v_{p}(st) < 0.
\]
In particular, we have $2P_{s, t} \in \Ker(E_{s, t}(\Q_{p}) \to E_{s, t}(\F_{p}))$ and
\[
	v_{p}\left( \frac{x(2P_{s, t})}{y(2P_{s, t})} \right)
	= v_{p}(st) \geq n+1.
\]
On the other hand,
\cite[Ch.\ IV, Theorem 6.4]{Silverman_AEC} gives an isomorphism
\[	
	z : \Ker(E_{s, t}(\Q_{p}) \to E_{s, t}(\F_{p})) \simeq p\Z_{p}; (x, y) \mapsto -\frac{x}{y}.
\]
In particular,
we see that $2P_{s, t} \in p^{n}E_{s, t}(\Q_{p})$.
Since $p$ is odd,
we obtain the assertion $P_{s, t} \in p^{n}E_{s, t}(\Q_{p})$.
This completes the proof.
\end{proof}

\subsection{Proof of \cref{main_result}}

\begin{proof}
As we have seen in the last of \S2,
it is sufficient to check the following two conditions.
\begin{enumerate}
\item
If $s^{4}+t^{2}$ is fourth-power-free and not a square,
then an obvious rational point $P_{s, t} := (-s^{2}, st) \in E_{s, t}(\Q)$ does not lie in the subgroup $pE_{s, t}(\Q)$.

\item
If $p$ divides exactly one of $s$ and $t$ and $p^{n+1}$ divides $st$,
then $x(2P_{s, t}) \not\in p^{n+1}\Z_{p}$ and $x(2P_{s, t})/y(2P_{s, t}) \in p^{n+1}\Z_{p}$.
Here, note that since the group $E_{s, t}(\F_{p})$ of the modulo $p$ rational points contains a 2-torsion point $(0, 0)$, the order $\#E_{s, t}(\F_{p})$ is even.
\end{enumerate}
Each of them follows immediately from \cref{primitivity,unramified_at_p} respectively.
\end{proof}

\subsection{Proof of \cref{refine_Fujita-Terai}}

\begin{proof} [Proof of \cref{refine_Fujita-Terai}]
Suppose that $t$ is a square, say $t = \tau^{2}$ for some $\tau \in \Z$.
Then, from the symmetry of $s$ and $\tau$,
we can find pairs of rational points $P_{s, \tau^{2}}, P_{\tau, s^{2}} \in E_{s, \tau^{2}}(\Q)$.
Moreover, \cref{unramified_at_p} shows that
if $p$ divides exactly one of $s$ and $\tau$, $p^{n+1}$ divides $s\tau$, and $E_{s, \tau^{2}}(\Q_{p})[p] = 0$,
then $P_{s, \tau^{2}}, P_{\tau, s^{2}} \in p^{n}E_{s, \tau^{2}}(\Q)$.
On the other hand,
Fujita and Terai \cite[Theorem 1.5(1)]{Fujita-Terai} proved that these two rational points can be extended to a system of generators of $E_{s, \tau^{2}}(\Q)$
whenever $s^{4}+\tau^{4}$ is fourth-power-free.
As a consequence,
the same argument in the proof of \cref{main_result}
with a replacement of \cref{Dainobu_criterion} to \cref{Dainobu_multiplicity}
implies the assertion.
\end{proof}

\section{Proof of \cref{infinite}}

Let $l \geq 5$ be a prime number,
$E = E^{(l)}$ be an elliptic curve defined by $y^{2} = x^{3}-lx$,
$\mathcal{E}$ be the N\'eron model of $E$ over $\Z_{l}$ (see s.g.\ \cite[Ch.\ IV]{Silverman_advanced}),
$\bar{\mathcal{E}}$ be the special fiber of $\mathcal{E}$,
$\bar{\mathcal{E}}^{0}$ be the identity component of $\mathcal{E}$,
and $c(\bar{\mathcal{E}}) = [\bar{\mathcal{E}} : \bar{\mathcal{E}}^{0}]$.
Then,
since the minimal discriminant of $E$ is $2^{6} \cdot l^{3}$ and we assume that $l \geq 5$,
the table of reduction types of elliptic curves
in \cite[p.\ 365]{Silverman_advanced} shows that
$\mathcal{E}$ is of Type III,
hence $c(E/\Q_{l}) = 2$.
Therefore, (the proof of) the N\'eron-Ogg-Shafarevich criterion \cite[Theorem 1]{Serre-Tate} implies that
for every prime number $p \neq 2, l$,
the Galois extension $\Q(E[p])/\Q$ is ramified at $v = 2, l$ ($p$, and $\infty$) and unramified outside $2, l, p$ and $\infty$.

\begin{proof} [Proof of \cref{infinite}]
In view of the above argument,
it is sufficient to confirm that
there exists infinitely many prime numbers $l$ of the form $l = s^{4}+t^{2}$
such that $s \equiv 0 \bmod{p^{n+1}}$ or $t \equiv 0 \bmod{p^{n+1}}$.
This is a consequence of a refinement of \cite[Theorem 1]{Friedlander-Iwaniec}
(see \cite[p. 947]{Friedlander-Iwaniec}).
This completes the proof of \cref{infinite}.
\end{proof}

In what follows,
we review the outline of the proof of \cite[Theorem 1]{Serre-Tate}
for the convenience of the reader.

\subsection{The mod $p$ N\'eron-Ogg-Shafarevich criterion for abelian varieties \`a la Serre-Tate \cite{Serre-Tate}}

The N\'eron-Ogg-Shafarevich criterion for abelian varieties
(defined over a finite extension of the field $\Q_{v}$ of $v$-adic numbers)
is usually stated as follows:

\begin{theorem} [{\cite[Theorem 1]{Serre-Tate}}] \label{usual_NOS}
Let $K$ be a finite extension of $\Q_{v}$
and $A$ be an abelian variety defined over $K$.
Then, the following conditions are equivalent to each other:
\begin{enumerate}
\item
The abelian variety $A$ has good reduction.

\item
For every integer $N$ prime to $v$,
the module $A[N]$ of $N$-torsion points in $A(\overline{K})$ is an unramified $\Gal(\overline{K}/K)$-module.

\item
There exist infinitely many integers $N$ prime to $v$
such that $A[N]$ is an unramified $\Gal(\overline{K}/K)$-module.

\item
For every/some prime number $p$ different from $v$,
the $p$-adic Tate module $T_{p}(A) := \varprojlim_{n} A[p^{n}]$ of $A$
is an unramified $\Gal(\overline{K}/K)$-module.
\end{enumerate}
\end{theorem}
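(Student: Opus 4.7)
The plan is to establish the equivalences cyclically as $(1)\Rightarrow(2)\Rightarrow(3)\Rightarrow(4)\Rightarrow(1)$, with the last implication being the genuinely non-trivial direction.

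For $(1)\Rightarrow(2)$, I would take the N\'eron model $\mathcal{A}/\mathcal{O}_{K}$ of $A$, which by hypothesis is an abelian scheme. Since $N$ is prime to the residue characteristic $v$, multiplication by $N$ on $\mathcal{A}$ is a finite \'etale isogeny, so $\mathcal{A}[N]$ is a finite \'etale group scheme over $\mathcal{O}_{K}$. Under the anti-equivalence between finite \'etale $\mathcal{O}_{K}$-group schemes and finite continuous $\Gal(K^{\ur}/K)$-modules, the Galois module $A[N]=\mathcal{A}[N](\overline{K})$ is therefore unramified.

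The implication $(2)\Rightarrow(3)$ is tautological. For $(3)\Rightarrow(4)$, I would choose a prime number $p\neq v$ which divides infinitely many of the integers $N$ appearing in (3); extracting the $p$-part of each such $N$, the subgroup $A[p^{k}]\subset A[N]$ is unramified for arbitrarily large $k$, hence for every $k\geq 0$ since $A[p^{k}]\subset A[p^{k+1}]$ as Galois modules. Passing to the inverse limit then shows that $T_{p}(A)$ is an unramified Galois module, and the ``some'' form of (4) follows. The ``every'' form in (4) is recovered at the end, once (4) for some $p$ is shown to imply (1), by applying the already-established implication $(1)\Rightarrow(2)$ with $N$ ranging over powers of an arbitrary prime different from $v$.

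The main obstacle is $(4)\Rightarrow(1)$. The plan is to analyse the identity component $\mathcal{A}_{k}^{0}$ of the special fiber of the N\'eron model of $A$. By Chevalley's structure theorem, applied after passage to the perfect closure of the residue field $k$, the group $\mathcal{A}_{k}^{0}$ is an extension of an abelian variety $B$ by a linear group $L=T\times U$, where $T$ is a torus and $U$ is unipotent. The Tate module $T_{p}(A)$ has $\Z_{p}$-rank $2\dim A=2(\dim B+\dim T+\dim U)$, whereas its inertia-invariant subspace $T_{p}(A)^{I}$, where $I\subset \Gal(\overline{K}/K)$ is the inertia subgroup, has rank $2\dim B+\dim T$ by the structure theory of the N\'eron model. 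Hypothesis (4) identifies $T_{p}(A)$ with $T_{p}(A)^{I}$, forcing $\dim T=\dim U=0$, whence $\mathcal{A}_{k}^{0}=B$ is an abelian variety and $A$ has good reduction. The technical heart of the argument is the rank formula for $T_{p}(A)^{I}$, which genuinely requires the structural analysis of the N\'eron model linking the action of inertia on the $p$-adic Tate module to the toric rank of the special fiber; this is where the depth of the theorem is concentrated.
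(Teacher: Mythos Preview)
Your core strategy---Chevalley decomposition of the special fiber plus a rank comparison between $T_p(A)$ and $T_p(A)^I$---is exactly the approach the paper outlines (following Serre--Tate), so the substantive content is right. Two issues remain, one genuine and one minor.

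\textbf{The gap in $(3)\Rightarrow(4)$.} Your claim that some prime $p\neq v$ divides infinitely many of the $N$'s, and moreover with unbounded $p$-adic valuation, is unjustified. For instance, condition~(3) could hold with the $N$'s running over all primes different from $v$; then no prime divides more than one of them. Even if some $p$ did divide infinitely many, the $p$-parts could all equal $p$ (take $N\in\{pq:q\text{ prime},\,q\neq p,v\}$), so you would only get $A[p]$ unramified, not $A[p^k]$ for all $k$. The paper (and Serre--Tate) sidesteps this by proving the hard direction as $(3)\Rightarrow(1)$ \emph{directly}: pick any $N$ from the infinite list large enough relative to the Tamagawa number $c(\tilde{\mathcal{A}})$; the isomorphism $A[N]^I\simeq\tilde{\mathcal{A}}(\bar k)[N]$ (their Lemma~2) combined with the structure of $\tilde{\mathcal{A}}(\bar k)[N]$ as an extension of a cyclic group of order dividing $c(\tilde{\mathcal{A}})$ by $(\Z/N\Z)^{\dim T+2\dim B}$ (their Lemma~1) forces $N^{\dim T+2\dim U}\leq c(\tilde{\mathcal{A}})$, hence $T=U=0$. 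Your rank argument for $(4)\Rightarrow(1)$ is precisely this computation at the Tate-module level; you just need to run it at finite level for $(3)\Rightarrow(1)$ as well, rather than routing through~(4).

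\textbf{A minor omission in $(4)\Rightarrow(1)$.} Concluding that $\mathcal{A}_k^0=B$ is an abelian variety does not by itself say that $A$ has good reduction; one still needs that the N\'eron model is proper over $\mathcal{O}_K$. The paper explicitly invokes a scheme-theoretic lemma (\cite[Lemma~3]{Serre-Tate}) for this last step; you should flag it too.
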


However, the proof of Serre and Tate in \cite{Serre-Tate} actually shows a finer statement.
Let $\mathcal{A}$ be the N\'eron model of $A$ (\cite{Neron}) over the ring $\mathcal{O}_{K}$ of integers in $K$,
$\tilde{\mathcal{A}}$ be the special fiber of $\mathcal{A}$,
$\tilde{\mathcal{A}}^{0}$ be the identity component of $\mathcal{A}$,
and $c(\tilde{\mathcal{A}}) = [\tilde{\mathcal{A}} : \tilde{\mathcal{A}}^{0}]$.
Then, a refinement of \cref{usual_NOS} can be stated as follows:

\begin{theorem} \label{refined_NOS}
The following condition is equivalent to each of the conditions in \cref{usual_NOS}:
\begin{enumerate}
\item[(5)]
There exists an integer $N$ prime to $v$ and $c(\tilde{\mathcal{A}})$
such that $A[N]$ is an unramified $\Gal(\overline{K}/K)$-module.
\end{enumerate}
\end{theorem}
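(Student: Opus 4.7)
The plan is to prove the equivalence $(1) \Leftrightarrow (5)$, which together with \cref{usual_NOS} yields the claim. The direction $(1) \Rightarrow (5)$ is immediate: if $A$ has good reduction, then $\mathcal{A}$ is an abelian scheme over $\mathcal{O}_{K}$, so $\tilde{\mathcal{A}}$ is an abelian variety and $c(\tilde{\mathcal{A}}) = 1$; hence any $N$ prime to $v$ automatically satisfies $\gcd(N, c(\tilde{\mathcal{A}})) = 1$ and the required unramifiedness of $A[N]$ follows from condition $(2)$ of \cref{usual_NOS}.

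For the essential direction $(5) \Rightarrow (1)$, the key tool is the N\'eron mapping property, which gives $A(K^{\ur}) = \mathcal{A}(\mathcal{O}_{K^{\ur}})$. Composing with the reduction map $\mathcal{A}(\mathcal{O}_{K^{\ur}}) \to \tilde{\mathcal{A}}(\overline{k})$, whose kernel is pro-$v$, and restricting to $N$-torsion for $N$ prime to $v$, we obtain a $\Gal(\overline{k}/k)$-equivariant injection
\[
	A[N]^{I_{K}} = A[N](K^{\ur}) \hookrightarrow \tilde{\mathcal{A}}[N](\overline{k}),
\]
where $I_{K} \subset \Gal(\overline{K}/K)$ denotes the inertia subgroup. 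The hypothesis $(5)$ asserts $A[N]^{I_{K}} = A[N]$, and hence $N^{2g} = |A[N]| \leq |\tilde{\mathcal{A}}[N](\overline{k})|$ with $g = \dim A$.

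Next, I would bound the right-hand side. Since $N$ is prime to $\Char k = v$, multiplication by $N$ is surjective on the smooth connected commutative group $\tilde{\mathcal{A}}^{0}(\overline{k})$, so the exact sequence $0 \to \tilde{\mathcal{A}}^{0} \to \tilde{\mathcal{A}} \to \Phi \to 0$ induces a short exact sequence on $N$-torsion, yielding $|\tilde{\mathcal{A}}[N](\overline{k})| = |\tilde{\mathcal{A}}^{0}[N](\overline{k})| \cdot |\Phi[N](\overline{k})|$. The second factor equals $1$ by the coprimality of $N$ and $c(\tilde{\mathcal{A}}) = |\Phi(\overline{k})|$. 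By Chevalley's structure theorem, applied over the perfect closure of $k$, the group $\tilde{\mathcal{A}}^{0}$ fits in an exact sequence $0 \to T \times U \to \tilde{\mathcal{A}}^{0} \to B \to 0$, where $B$ is an abelian variety of some dimension $b$, $T$ is a torus of dimension $t$, and $U$ is a unipotent group of dimension $u$, satisfying $b + t + u = g$. A standard computation then gives $|\tilde{\mathcal{A}}^{0}[N](\overline{k})| = N^{2b + t}$.

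Combining these, $N^{2g} \leq N^{2b + t}$, and since $N \geq 2$ this forces $2g \leq 2b + t$; together with $g = b + t + u$ this rearranges to $t + 2u \leq 0$, hence $t = u = 0$. Therefore $\tilde{\mathcal{A}}^{0} = B$ is an abelian variety of dimension $g$, so the identity component $\mathcal{A}^{0}$ of the N\'eron model has proper special fibre and proper generic fibre; it is thus a smooth proper group scheme over $\mathcal{O}_{K}$, i.e., an abelian scheme. By the uniqueness property of the N\'eron model---any abelian scheme is its own N\'eron model---one concludes $\mathcal{A}^{0} = \mathcal{A}$, so $A$ has good reduction. The main obstacle I anticipate is this last step: while the torsion-counting cleanly rules out nontrivial torus and unipotent contributions to $\tilde{\mathcal{A}}^{0}$, upgrading ``$\tilde{\mathcal{A}}^{0}$ is proper'' to ``$\Phi$ is trivial'' is where the structure theory of the N\'eron model enters nontrivially.
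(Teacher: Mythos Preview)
Your argument is correct and follows essentially the same route as the paper (which is Serre and Tate's original proof): reduce to the special fibre via the N\'eron property, invoke Chevalley's theorem to decompose $\tilde{\mathcal{A}}^{0}$, count $N$-torsion to force $t = u = 0$, and then upgrade properness of the special fibre to properness of the model. Two minor remarks are worth making.

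First, the paper (following \cite[Lemma~2]{Serre-Tate}, stated here as \cref{Hensel_isom}) uses that the reduction map on prime-to-$v$ torsion is an \emph{isomorphism}, not merely an injection, which gives the equality $2g = 2b + t$ directly. Your inequality $2g \leq 2b + t$ is of course enough, and is arguably cleaner since it avoids proving surjectivity.

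Second, your diagnosis of the ``main obstacle'' is slightly misplaced. The passage from ``$\tilde{\mathcal{A}}^{0}$ is proper'' to ``$\Phi$ is trivial'' is exactly what your own argument handles: once $\mathcal{A}^{0}$ is an abelian scheme, the N\'eron mapping property applied in both directions forces $\mathcal{A}^{0} = \mathcal{A}$. The genuine gap is the sentence ``has proper special fibre and proper generic fibre; it is thus a smooth proper group scheme over $\mathcal{O}_{K}$.'' Properness is not in general detectable fibrewise, and this is precisely the content of \cite[Lemma~3]{Serre-Tate} (a valuative-criterion argument specific to smooth separated schemes over a DVR). The paper invokes this lemma explicitly; you should too.
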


In the proof of \cref{infinite},
we used \cref{refined_NOS} for $N = p$.

The proof of \cref{refined_NOS} is given in a complete form
in the proof of \cite[Theorem 1]{Serre-Tate}.
In fact, the implication $(1) \Rightarrow (2), (4)$ is well-known before \cite{Serre-Tate}.
However, the key implication in the proof of \cref{infinite} is $(5) \Rightarrow (1)$
(i.e., $\neg (1) \Rightarrow \neg (5)$),
which implies that $\Q(E_{s, t}[p])/\Q$ is actually ramified at $l = s^{4}+t^{2}$. 
In what follows,
we review the outline of the proof of the implication $\neg (1) \Rightarrow \neg (5)$
following Serre and Tate \cite{Serre-Tate}
for the convenience of the reader.

The key lemmas for the proof of \cref{refined_NOS} are the following:

\begin{lemma} [{cf.\ \cite{Barsotti,Chevalley,Conrad_Chevalley,Rosenlicht}}] \label{Chevalley}
\footnote{
	According to the introduction of \cite{Rosenlicht},
	\cref{Chevalley} was first announced by Chevalley in 1953.
	}
Let $k$ be a field and $G$ be a connected algebraic group scheme over $k$.
Then, there exists a connected linear algebraic (i.e., affine) normal subgroup scheme $H$ of $G$
such that $G/H$ is an abelian (i.e., projective) variety.
\end{lemma}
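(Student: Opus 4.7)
The plan is to take $H$ to be the maximal connected affine closed normal subgroup scheme of $G$ and to verify that the quotient $G/H$ is proper, hence an abelian variety (as it is automatically smooth, connected, and equipped with a group structure inherited from $G$).

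First I would reduce to the case $k = \bar{k}$: a closed subgroup scheme of $G_{\bar{k}}$ stable under the action of $\Gal(\bar{k}/k)$ descends to a subgroup scheme of $G$, and the imperfect situation is handled by first passing to the perfect closure and descending by a Frobenius-twist argument. To construct $H$, I would verify that the product $H_{1} H_{2}$ of two connected affine closed normal subgroup schemes, realized as the scheme-theoretic image of the multiplication map $H_{1} \times H_{2} \to G$, is again connected, closed, normal, and affine: the last point uses the general fact that an extension of affine group schemes of finite type by an affine group scheme remains affine (equivalently, that the quotient of an affine group scheme by a closed normal subgroup is affine). Since $G$ is of finite type over $k$, the dimensions of such subgroups are bounded above by $\dim G$, so a maximal $H$ exists in this collection.

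The crux is then to show that $G/H$ is proper. I would proceed by contradiction: if $G/H$ were not proper, a theorem of Rosenlicht would produce a nontrivial connected affine closed normal subgroup scheme $K \subset G/H$; its preimage $H' \subset G$ along the quotient $G \to G/H$ is then a connected closed normal subgroup fitting in a short exact sequence $1 \to H \to H' \to K \to 1$ with both $H$ and $K$ affine, so that $H'$ is itself affine and strictly contains $H$, contradicting maximality. The main obstacle is precisely the Rosenlicht-type input that any non-proper smooth connected algebraic group must contain a nontrivial connected affine closed normal subgroup scheme; a self-contained proof would replace this with an explicit geometric construction, choosing a smooth compactification of $G/H$, analyzing the indeterminacy of the translation action on the boundary divisor, and extracting the desired affine subgroup from the resulting infinitesimal data, as carried out in \cite{Barsotti,Chevalley,Conrad_Chevalley,Rosenlicht}.
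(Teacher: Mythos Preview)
The paper does not give its own proof of this lemma at all: it is stated as a classical result with references to \cite{Barsotti,Chevalley,Conrad_Chevalley,Rosenlicht} and then used as a black box in the review of the proof of the N\'eron--Ogg--Shafarevich criterion (specifically, to decompose the identity component $\tilde{\mathcal{A}}^{0}$ of the special fiber of the N\'eron model as an extension of an abelian variety by a linear group). So there is nothing to compare your argument against in the paper itself.

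That said, your outline is a faithful sketch of the standard strategy found in the cited references, particularly \cite{Conrad_Chevalley}: take $H$ to be the maximal smooth connected affine normal subgroup, and show $G/H$ is proper by appealing to the fact that a smooth connected group with no nontrivial such subgroup must be an abelian variety. Two cautions are worth flagging. First, your descent step from $\bar{k}$ to $k$ is where most of the genuine difficulty lies when $k$ is imperfect; the phrase ``Frobenius-twist argument'' hides real work, and in fact Chevalley's theorem as usually stated requires $k$ to be perfect (over imperfect fields one only gets a quotient that is a \emph{pseudo}-abelian variety, cf.\ the appendix to \cite{Conrad_Chevalley}). The paper only applies the lemma over finite fields, so this is harmless here, but your statement and argument should acknowledge the hypothesis. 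Second, the ``Rosenlicht-type input'' you invoke is essentially equivalent in depth to the theorem itself, so your sketch is honest in calling it the crux rather than a routine step.
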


\begin{lemma} [{\cite[Lemma 1]{Serre-Tate}}] \label{vanishing_cycle}
Let $k$ be a field,
$\tilde{\mathcal{A}}$ be a commutative algebraic group over $k$,
and $\tilde{\mathcal{A}}^{0}$ be the connected component of $\tilde{\mathcal{A}}$.
Suppose that $\tilde{\mathcal{A}}^{0}$ is an extension of an abelian variety $B$ by an algebraic subgroup $H$ of $\tilde{\mathcal{A}}$
and $H \simeq T \times U$ for an algebraic torus $T$ and a unipotent algebraic group $U$.
Let $c(\tilde{\mathcal{A}}) := [\tilde{\mathcal{A}} : \tilde{\mathcal{A}}^{0}]$ be the index of $\tilde{\mathcal{A}}^{0}$ in $\tilde{\mathcal{A}}$.
Suppose that $k$ is perfect and $N$ is an integer prime to $\Char(k)$.
Then, the $\Z/N\Z$-module $\tilde{\mathcal{A}}(\overline{k})[N]$ is isomorphic to an extension of a cyclic group of order dividing $c(\tilde{\mathcal{A}})$ by a free $\Z/N\Z$-module of rank $\dim T+2\dim B$.
\end{lemma}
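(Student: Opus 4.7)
The plan is to apply the multiplication-by-$N$ snake lemma twice, first to the component-group extension and then to the Chevalley-type decomposition, and to assemble the resulting pieces. The geometric input needed throughout is that for every smooth connected commutative algebraic group $G$ over the perfect field $k$, the isogeny $[N]:G \to G$ is étale whenever $N$ is prime to $\Char(k)$, hence surjective on $G(\overline{k})$; this is precisely where the coprimality hypothesis enters.

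First I would write down the short exact sequence
\[
	0 \to \tilde{\mathcal{A}}^{0} \to \tilde{\mathcal{A}} \to \Phi \to 0,
\]
where $\Phi := \tilde{\mathcal{A}}/\tilde{\mathcal{A}}^{0}$ is a finite étale $k$-group scheme of order $c(\tilde{\mathcal{A}})$. Applying the snake lemma for multiplication by $N$ together with the surjectivity of $[N]$ on $\tilde{\mathcal{A}}^{0}(\overline{k})$ yields
\[
	0 \to \tilde{\mathcal{A}}^{0}(\overline{k})[N] \to \tilde{\mathcal{A}}(\overline{k})[N] \to \Phi(\overline{k})[N] \to 0,
\]
in which the right-hand quotient is a subgroup of $\Phi(\overline{k})$ and therefore has order dividing $c(\tilde{\mathcal{A}})$.

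Next I would analyse $\tilde{\mathcal{A}}^{0}(\overline{k})[N]$ through the extension
\[
	0 \to H \to \tilde{\mathcal{A}}^{0} \to B \to 0
\]
with $H \simeq T \times U$. The same snake-lemma argument gives
\[
	0 \to H(\overline{k})[N] \to \tilde{\mathcal{A}}^{0}(\overline{k})[N] \to B(\overline{k})[N] \to 0,
\]
and each outer term is standard. Since $B$ is an abelian variety of dimension $\dim B$ and $N$ is prime to $\Char(k)$, one has $B(\overline{k})[N] \simeq (\Z/N\Z)^{2\dim B}$. After trivialising $T$ over $\overline{k}$ one obtains $T(\overline{k})[N] \simeq \mu_{N}^{\dim T} \simeq (\Z/N\Z)^{\dim T}$. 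Finally $U(\overline{k})[N] = 0$, because any unipotent group over the algebraically closed field $\overline{k}$ admits a filtration with $\G_{a}$-quotients and $\G_{a}(\overline{k})[N] = 0$ as soon as $N$ is invertible in $\overline{k}$. Hence $H(\overline{k})[N] \simeq (\Z/N\Z)^{\dim T}$.

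The sequence for $\tilde{\mathcal{A}}^{0}(\overline{k})[N]$ then splits because its quotient $B(\overline{k})[N]$ is a free, hence projective, $\Z/N\Z$-module, which gives $\tilde{\mathcal{A}}^{0}(\overline{k})[N] \simeq (\Z/N\Z)^{\dim T + 2\dim B}$. Substituting this into the component-group sequence delivers the desired description of $\tilde{\mathcal{A}}(\overline{k})[N]$ as an extension of a finite group of order dividing $c(\tilde{\mathcal{A}})$ by a free $\Z/N\Z$-module of rank $\dim T + 2\dim B$; the cyclicity of the quotient asserted in the lemma is not automatic from our extension and should either come from an application-specific assumption on $\Phi$ or be read as ``finite'' in this general formulation. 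The main technical point throughout is the repeated use of étaleness of $[N]$ on smooth connected commutative algebraic groups to obtain surjectivity on $\overline{k}$-points; once this is granted, the rest is a formal diagram chase combined with the standard $N$-torsion structure of abelian varieties, tori, and unipotent groups.
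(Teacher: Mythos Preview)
The paper does not supply its own proof of this lemma; it is quoted from Serre--Tate, and your argument is exactly the standard one found there: two applications of the snake lemma for $[N]$, combined with the known $N$-torsion structure of abelian varieties, tori, and unipotent groups, together with the \'etaleness of $[N]$ on a smooth connected commutative group when $N$ is prime to $\Char(k)$.

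Your caveat about the word ``cyclic'' is well taken and worth recording. Serre--Tate's original Lemma~1 asserts only that the quotient has order dividing $c(\tilde{\mathcal{A}})$; in general $\Phi(\overline{k})[N]$ need not be cyclic (take $\tilde{\mathcal{A}}$ to be a product of two elliptic curves with component groups $\Z/2\Z$ and $N=2$). The paper's phrasing is thus a slight over-statement, but it is harmless in the intended application: for an elliptic curve the component group of the N\'eron model is always cyclic by the Kodaira--N\'eron classification, and indeed the paper only uses this lemma for the curves $E_{s,t}$, where it has already computed $c(E/\Q_{l}) = 2$.
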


\begin{lemma} [{\cite[Lemma 2]{Serre-Tate}}] \label{Hensel_isom}
Let $K$ be a finite extension of $\Q_{v}$,
$\mathcal{O}$ be the ring of integers of $K$,
$\mathfrak{m}$ be its maximal ideal,
$k = \mathcal{O}/\mathfrak{m}$ be the residue field of $\mathcal{O}$,
and $I$ be the inertia subgroup of the Galois group $G_{K} := \Gal(\overline{K}/K)$.
Let $A$ be an abelian variety defined over $K$
and $\mathcal{A}$ be its N\'eron model.
Let $N$ be an integer prime to $v$.
Then the modulo $\mathfrak{m}$ reduction map defines an isomorphism
\[
	A[N]^{I}
	= A(\overline{K})[N]^{I}
	\simeq \mathcal{A}(\overline{\mathcal{O}})[N]^{I}
	\simeq \tilde{\mathcal{A}}(\overline{k})[N]
\]
which is compatible with the action of the group $G_{K}/I \simeq \Gal(\overline{k}/k)$.
Here, $\overline{O}$ denotes the ring of integers in $\overline{K}$.
\end{lemma}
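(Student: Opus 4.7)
The plan is to break the chain of isomorphisms into three pieces, each handled by a standard ingredient of N\'eron-model theory. Let $K^{\ur} \subset \overline{K}$ denote the maximal unramified extension of $K$ and $\mathcal{O}^{\ur}$ its ring of integers; by construction $\Gal(\overline{K}/K^{\ur}) = I$, and $\mathcal{O}^{\ur}$ is Henselian with residue field $\overline{k}$. The identity $A[N]^{I} = A(\overline{K})[N]^{I}$ is just the definition of $A[N]$, so I focus on the two genuine isomorphisms.

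For the middle one, $A(\overline{K})[N]^{I} \simeq \mathcal{A}(\overline{\mathcal{O}})[N]^{I}$, I would first apply Galois descent for the separated $K$-scheme $A$ and the separated $\mathcal{O}$-scheme $\mathcal{A}$ to rewrite the two sides as $A(K^{\ur})[N]$ and $\mathcal{A}(\mathcal{O}^{\ur})[N]$. Then, since $\Spec(\mathcal{O}^{\ur})$ is a filtered colimit of \'etale $\mathcal{O}$-algebras---namely the rings of integers of finite unramified extensions of $K$---the N\'eron mapping property supplies a canonical bijection $\mathcal{A}(\mathcal{O}^{\ur}) = A(K^{\ur})$, which restricts to the desired identification on $N$-torsion.

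The substantive step is the last isomorphism $\mathcal{A}(\mathcal{O}^{\ur})[N] \simeq \tilde{\mathcal{A}}(\overline{k})[N]$ induced by reduction modulo $\mathfrak{m}$. The reduction map $\pi : \mathcal{A}(\mathcal{O}^{\ur}) \to \tilde{\mathcal{A}}(\overline{k})$ is surjective because $\mathcal{A}/\mathcal{O}$ is smooth and $\mathcal{O}^{\ur}$ is Henselian (standard Hensel lifting of smooth morphisms). Its kernel, the subgroup of points reducing to the identity of $\tilde{\mathcal{A}}$, is identified with the formal group $\hat{\mathcal{A}}(\mathfrak{m}^{\ur})$ attached to the completion of $\mathcal{A}$ along its identity section. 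Since $N$ is prime to $v$ and hence a unit in $\mathcal{O}^{\ur}$, multiplication by $N$ is an automorphism of $\hat{\mathcal{A}}(\mathfrak{m}^{\ur})$; an explicit inverse can be produced via the formal logarithm, or more elementarily by Newton iteration on the power series defining $[N]$. Hence $\ker(\pi)[N] = 0$, which gives injectivity of $\pi$ on $N$-torsion. For surjectivity, I would lift $\tilde{P} \in \tilde{\mathcal{A}}(\overline{k})[N]$ to some $P \in \mathcal{A}(\mathcal{O}^{\ur})$ via $\pi$, observe that $NP \in \ker(\pi)$, write $NP = NQ$ for the unique $Q$ afforded by $N$-divisibility of the kernel, and conclude that $P - Q$ is an $N$-torsion lift of $\tilde{P}$.

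The main technical point, modest as it is, is the formal-group calculation establishing unique $N$-divisibility of $\hat{\mathcal{A}}(\mathfrak{m}^{\ur})$; this is really the heart of the lemma. Galois compatibility with $G_{K}/I \simeq \Gal(\overline{k}/k)$ is automatic, since Galois descent, the N\'eron mapping property, and reduction modulo $\mathfrak{m}$ are all functorial in the base $\mathcal{O}$-algebra. Notably, the argument never invokes properness of $\mathcal{A}$, so it applies uniformly in both the good and bad reduction cases---exactly what is needed for the downstream application to \cref{refined_NOS}.
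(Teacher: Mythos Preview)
The paper does not actually prove this lemma; it is simply quoted as \cite[Lemma 2]{Serre-Tate} and then used as a black box in the subsequent sketch of the implication $(5) \Rightarrow (1)$ in \cref{refined_NOS}. Your argument is correct and is essentially the original Serre--Tate proof: pass to $K^{\ur}$ via Galois descent, invoke the N\'eron mapping property to identify $A(K^{\ur})$ with $\mathcal{A}(\mathcal{O}^{\ur})$, use smoothness plus Henselianity for surjectivity of reduction, and use that multiplication by $N$ is an automorphism of the formal group (since $N$ is a unit) to handle injectivity and surjectivity on $N$-torsion. Nothing is missing.
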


The proof of $(1) \Rightarrow (2)$ is easy.
Suppose that $A$ has good reduction.
Then, \cref{vanishing_cycle} shows that
the $\Z/N\Z$-modules $A(\overline{K})[N]$ and $\tilde{\mathcal{A}}(\overline{O})[N]$ are free of rank $2\dim A = 2\dim \tilde{\mathcal{A}}$.
Hence, \cref{Hensel_isom} implies an equality $A(\overline{K})[N] = A(\overline{K})[N]^{I}$,
which means that $A(\overline{K})[N]$ is an unramified $\Gal(\overline{K}/K)$-module.
This completes the proof of the implication $(1) \Rightarrow (2)$.

For the proof of $(5) \Rightarrow (1)$,
suppose that there exists an integer $N$ prime to $v$ and $c(\tilde{\mathcal{A}})$
such that $A(\overline{K})[N]$ is an unramified $\Gal(\overline{K}/K)$-module.
First, note that
\cref{Chevalley} implies that
there exist an abelian variety $B$, an algebraic torus $T$, and and unipotent algebraic group $U$ 
such that $\tilde{\mathcal{A}}$ is an isomorphic to an extension of $B$ by $T \times U$ over the residue field $k$ of $K$.
In particular, we have an inequality $\dim \tilde{\mathcal{A}} \geq \dim B + \dim T$.
On the other hand, \cref{Hensel_isom} shows
an isomorphism $A(\overline{K})[N] \simeq \tilde{\mathcal{A}}(\overline{O})[N]$ of $\Z/N\Z$-modules.
Since we assume that $N$ is prime to $v = \Char k$ and $c(\tilde{\mathcal{A}})$,
\cref{vanishing_cycle} shows that the $\Z/N\Z$-module $A(\overline{K})[N]$
is free of rank $2 \dim A = \dim T + 2\dim B$.
This implies that $\dim T = 0$, hence $\dim A = \dim B = \dim \tilde{\mathcal{A}}$.
In particular, $\tilde{A} \simeq B$ is an abelian variety,
especially a proper scheme over $k$.
Finally, a purely scheme theoretic lemma \cite[Lemma 3]{Serre-Tate} shows that
the N\'eron model $\mathcal{A}$ of $A$ is a proper scheme over the ring $\mathcal{O}$ of integers in $K$,
hence $A$ has good reduction \cite{Neron}.
This completes the proof of the implication $(5) \Rightarrow (1)$.

\section{Upper bounds for the Mordell-Weil rank of $E_{s, t}$}

By a standard descent method for 2-isogeny,
we can prove the following,
which itself should be well-known for experts.
(See e.g.\ \cite[Ch.\ X, Propositions 6.1 and 6.2]{Silverman_AEC} for similar results.)

\begin{theorem} \label{upper_bound}
Let $l$ be a non-zero integer
and $E^{(l)}$ be an elliptic curve defined by $y^{2} = x^{3}-lx$.
Suppose that $l$ is a prime number.
Then, the following inequality holds.
\[
	\rank E^{(l)}(\Q)
	\leq \begin{cases}
	0 & \text{if $l \equiv 3, 11, 13 \bmod{16}$}, \\
	1 & \text{if $l \equiv 2, 5, 7, 9, 15 \bmod{16}$}, \\
	2 & \text{if $l \equiv 1 \bmod{16}$}.
	\end{cases}
\]
\end{theorem}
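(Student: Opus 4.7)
The plan is to perform a complete 2-descent on $E^{(l)}$ via the rational 2-isogeny $\phi\colon E^{(l)}\to \hat E^{(l)}$ with kernel $\{O,(0,0)\}$, where $\hat E^{(l)}\colon y^2=x^3+4lx$, and to bound the rank by controlling the $\phi$- and $\hat\phi$-Selmer groups. Following \cite[Ch.~X \S4]{Silverman_AEC}, we have descent maps
\[
\alpha\colon E^{(l)}(\Q)/\hat\phi(\hat E^{(l)}(\Q))\hookrightarrow \Q^\times/(\Q^\times)^2,\qquad
\hat\alpha\colon \hat E^{(l)}(\Q)/\phi(E^{(l)}(\Q))\hookrightarrow \Q^\times/(\Q^\times)^2,
\]
defined on non-2-torsion points by $(x,y)\mapsto x$, together with the rank formula $2^{r+2}=|\image(\alpha)|\cdot|\image(\hat\alpha)|$ with $r=\rank E^{(l)}(\Q)$. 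Hence it suffices to bound $|\image(\alpha)|\leq |\Sel^{\phi}|$ and $|\image(\hat\alpha)|\leq|\Sel^{\hat\phi}|$.

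Since $l$ is prime, the relevant ambient groups in $\Q^\times/(\Q^\times)^2$ are $\langle -1,l\rangle$ and $\langle -1,2,l\rangle$, and the images always contain the classes of $-l$ and $l$ respectively (the images of the 2-torsion point). For each square-free class $d$ the 2-covering is
\[
C_d^\alpha\colon N^2=dM^4-(l/d)e^4,\qquad
C_d^{\hat\alpha}\colon N^2=dM^4+(4l/d)e^4,
\]
and membership in the Selmer group is equivalent to $C_d$ having $\Q_v$-points for every place $v$. The symmetry $M\leftrightarrow e$ identifies $C_{-1}^\alpha\simeq C_l^\alpha$ and $C_2^{\hat\alpha}\simeq C_{2l}^{\hat\alpha}$, while every $C_d^{\hat\alpha}$ with $d<0$ fails at the real place. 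Therefore each Selmer group has order $2$ or $4$, and the whole computation reduces to deciding the everywhere-local solvability of $C_{-1}^\alpha\colon N^2+M^4=le^4$ and $C_2^{\hat\alpha}\colon N^2=2(M^4+le^4)$.

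The local analysis splits across three kinds of places. At $\infty$ both curves are solvable since $l>0$. At any prime $p\nmid 2l$ the coverings have good reduction and Hensel's lemma produces a $\Q_p$-point. At the prime $l$ (for odd $l$), $C_{-1}^\alpha$ is solvable iff $-1$ is a quadratic residue mod $l$ (i.e.\ $l\equiv 1\pmod 4$), and $C_2^{\hat\alpha}$ is solvable iff $2$ is a quadratic residue mod $l$ (i.e.\ $l\equiv\pm 1\pmod 8$). At $p=2$ one normalizes $(M,N,e)\in\Z_2^3$ so that $\min(v_2(M),v_2(e))=0$, uses $M^4\equiv 1\pmod{16}$ for $M$ odd (and $\equiv 0\pmod{16}$ for $M$ even) to reduce the equation modulo $16$ (with refinement modulo $32$ or $64$ in the borderline cases), and invokes the criterion that $u\in\Z_2^\times$ is a square iff $u\equiv 1\pmod 8$. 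Running through the parity subcases of $(M,e)\bmod 8$ then yields a sharp criterion for 2-adic solvability in each residue class of $l\bmod 16$; the edge case $l=2$ is handled separately and directly via the obvious rational points on the coverings.

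Collating all local conditions determines the Selmer group sizes explicitly: $|\Sel^\phi|=4$ precisely when $l\in\{1,2,5,9\}\pmod{16}$, and $|\Sel^{\hat\phi}|=4$ precisely when $l\in\{1,7,15\}\pmod{16}$. Feeding these into the rank formula produces the three listed upper bounds on $r$. The hard part is the 2-adic bookkeeping for $C_2^{\hat\alpha}$: its leading coefficient $2$ couples with the quartic terms so that, in some residue classes (notably $l\equiv 9\pmod{16}$), the obstruction at $2$ rules out the covering even though the condition at $l$ is satisfied, whereas in others (notably $l\equiv 7,15\pmod{16}$) the solvability has to be certified by a finer mod-$32$ or mod-$64$ analysis; each parity subcase for $(M,e)$ must be examined separately to obtain the correct dichotomy.
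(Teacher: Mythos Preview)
Your argument is correct and follows essentially the same route as the paper: a $2$-isogeny descent on $E^{(l)}$ via $\phi$ and $\hat\phi$, reduction of each Selmer group to the local solvability of a single covering ($C_{-1}^{\alpha}$ resp.\ $C_{2}^{\hat\alpha}$), and a case-by-case local analysis at $\infty$, $l$, and $2$ that produces the same table of Selmer orders. The only cosmetic discrepancy is that your labels $\Sel^{\phi}$ and $\Sel^{\hat\phi}$ are swapped relative to the paper's conventions (your $\Sel^{\phi}$ is the paper's $\Sel(\Q,E^{(-4l)}[\hat\phi])$ and vice versa), but the computations and the resulting rank bounds are identical.
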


Since the elliptic curve $E_{s, t} : y^{2} = x^{3}-(s^{4}+t^{2})x$ in \cref{primitivity}
has an obvious rational point $P_{s, t} = (-s^{2}, st)$ whose order is infinite,
we obtain the following:

\begin{corollary} \label{rank_one}
Let $s, t$ be integers
and $E_{s, t}$ be an elliptic curve defined by $y^{2} = x^{3}-(s^{4}+t^{2})x$.
Suppose that $s^{4}+t^{2}$ is a prime number such that $s$ is even and $t \equiv \pm 3 \bmod{8}$.
Then, the equality $\rank E_{s, t}(\Q) = 1$ holds.
\end{corollary}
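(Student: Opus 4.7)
The plan is to combine the upper bound on the rank provided by \cref{upper_bound} with the existence of the explicit non-torsion point $P_{s,t}$ from \cref{primitivity}. The first step is a short congruence computation: with $s$ even we have $s^{4} \equiv 0 \bmod 16$, and for any $t \equiv \pm 3 \bmod 8$ one checks that $t^{2} \equiv 9 \bmod 16$ (whereas $t \equiv \pm 1 \bmod 8$ would give $t^{2} \equiv 1 \bmod 16$). Hence $l := s^{4}+t^{2} \equiv 9 \bmod 16$, which places us in the second case of \cref{upper_bound}, yielding
\[
\rank E_{s,t}(\Q) = \rank E^{(l)}(\Q) \leq 1.
\]

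Next I would establish the matching lower bound $\rank E_{s,t}(\Q) \geq 1$ by showing that $P_{s,t} = (-s^{2}, st)$ has infinite order. Since $l = s^{4}+t^{2}$ is a prime, it is in particular not a perfect square, so \cite[Ch.\ X, Proposition 6.1(a)]{Silverman_AEC} (as already invoked in the proof of \cref{primitivity}) gives $E_{s,t}(\Q)_{\tors} = \langle (0,0)\rangle$. Because $s$ is even and $l$ is prime, we have $s \neq 0$ and $t \neq 0$, so $P_{s,t} \neq (0,0), \infty$; consequently $P_{s,t}$ is non-torsion and contributes a free rank-one summand to $E_{s,t}(\Q)$.

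Combining the two bounds immediately gives $\rank E_{s,t}(\Q) = 1$, which is the claim of \cref{rank_one}. There is essentially no hard step here: all the real work has been absorbed into \cref{upper_bound} (the 2-descent computation bounding the rank by the $16$-residue class of $l$) and into \cref{primitivity} (the height-theoretic argument producing the non-torsion point $P_{s,t}$). The only mild point to be careful about is verifying that the congruence hypotheses $s \equiv 0 \bmod 2$ and $t \equiv \pm 3 \bmod 8$ land exactly in the residue class $l \equiv 9 \bmod 16$ covered by the rank-$\leq 1$ case of \cref{upper_bound}, rather than in one of the rank-$\leq 2$ or rank-$0$ cases.
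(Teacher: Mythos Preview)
Your proof is correct and follows essentially the same approach as the paper: combine the upper bound $\rank E^{(l)}(\Q) \leq 1$ for $l \equiv 9 \bmod 16$ from \cref{upper_bound} with the fact that $P_{s,t}$ has infinite order (since $E_{s,t}(\Q)_{\tors} = \langle (0,0)\rangle$ and $P_{s,t} \neq (0,0),\infty$). The only addition you make beyond the paper's one-line deduction is the explicit verification that the hypotheses on $s$ and $t$ force $s^{4}+t^{2} \equiv 9 \bmod 16$, which the paper leaves to the reader.
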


\begin{remark}
The preceding works
\cite{Sairaiji-Yamauchi_JNT,Sairaiji-Yamauchi_JTNB,Hiranouchi}
obtained some lower bounds for the orders of the $p$-parts of the ideal class groups $\Cl(\Q(E[p^{n}]))$ of the $p^{n}$-division fields $\Q(E[p^{n}])$ of elliptic curves $E$.
However, their general results fail to give any non-trivial bounds when $\rank E(\Q) \leq 1$.
Therefore, it is natural to ask whether
we can obtain some lower bounds for the order of $\Cl(\Q[p])[p]$
for any elliptic curve $E$ such that $\rank E(\Q) \leq 1$.
We can give an affirmative answer to this question
from \cref{main_result,rank_one}.
Indeed, we can obtain an infinite family of elliptic curves $E_{s,t}$
such that $\rank E_{s,t}(\Q) = 1$ and $\Cl(\Q(E_{s,t}[p]))[p] \neq 0$.
In fact, \cref{infinite} ensures that
the set of such number fields $\Q(E_{s,t}[p])$ contains infinitely isomorphism classes.
\end{remark}

\subsection{Proof of \cref{upper_bound}}

For the convenience of the reader,
we give a proof of \cref{upper_bound} along the descent method in \cite[Ch.\ X]{Silverman_AEC}.

Let $\phi : E^{(l)} \to E^{(-4l)}$ be a rational map defined by $\phi(x, y) := (y^{2}/x^{2}, -y(x^{2}+l)/x^{2})$.
Then, it is a 2-isogeny whose kernel is generated by the point $(0, 0)$ of order 2.
Let $\hat{\phi}$ be the dual isogeny of $\phi$.
Then, we have an exact sequence with natural maps
\begin{align*}
	0
	&\to E^{(-4l)}(\Q)[\hat{\phi}]/\phi(E^{(l)}(\Q)[2])
		\to E^{(-4l)}(\Q)/\phi(E^{(l)}(\Q)) \\
	&\xrightarrow{\hat{\phi}} E^{(l)}(\Q)/2E^{(l)}(\Q)
		\to E^{(l)}(\Q)/\hat{\phi}(E^{(-4l)}(\Q))
			\to 0.
\end{align*}
Since $E^{(-4l)}(\Q)[\hat{\phi}] = \{ \infty, (0, 0) \}$ and $\phi(E^{(l)}(\Q)[2]) = \phi(\{ \infty, (0, 0) \}) = \{ \infty \}$,
we have
\begin{align*}
	\rank E^{(l)}(\Q)
	&= \dim_{\F_{2}} (E^{(l)}(\Q)/2E^{(l)}(\Q))-\dim_{\F_{2}} E^{(l)}(\Q)[2] \\
	&= \dim_{\F_{2}} (E^{(-4l)}(\Q)/\phi(E^{(l)}(\Q)))
		- \dim_{\F_{2}} (E^{(-4l)}(\Q)[\hat{\phi}]/\phi(E^{(l)}(\Q)[2])) \\
	&\quad + \dim_{\F_{2}} (E^{(l)}(\Q)/\hat{\phi}(E^{(-4l)}(\Q)))
		- 1 \\
	&\leq \dim_{\F_{2}} \Sel(\Q, E^{(l)}[\phi]) + \dim_{\F_{2}} \Sel(\Q, E^{(-4l)}[\hat{\phi}]) - 2.
\end{align*}
Here, recall that the $\phi$-Selmer group $\Sel(\Q, E^{(l)}[\phi])$ ($= S^{(\phi)}(E^{(l)}/\Q)$ in \cite{Silverman_AEC}) can be identified with
the $\F_{2}$-vector space consisting of the isomorphism classes of principal homogeneous spaces for $E^{(l)}$
which are trivialized by $\phi$ and have $\Q_{v}$-rational points at every place $v$.
Such classes are parametrized by $d\Q^{\times 2} \in \Q^{\times}/\Q^{\times 2}$
and has a representative given by
\[
	D^{(l)}_{d} : dX_{2}^{2} = d^{2}X_{0}^{4} + 4lX_{1}^{4}.
\]
Thus, we can check that
\footnote{
	Indeed,
	since $[X_{0} : X_{1} : X_{2}] = [0 : 1 : 2] \in D^{(l)}_{l}(\Q)$,
	it is sufficient to check that
	$D^{(l)}_{-1}, D^{(l)}_{2},  D^{(l)}_{-2} \not\in \Sel(\Q, E^{(l)}[\phi])$,
	which follows from
	$D^{(l)}_{-1}(\R) = D^{(l)}_{-2}(\R) = \emptyset$ for every $l$,
	and $D^{(l)}_{2}(\Q_{2}) = \emptyset$ for $l \not\equiv \pm1, 2, 7 \bmod{16}$.
	}
\[
	\{ D^{(l)}_{1}, D^{(l)}_{l} \}
	\subset \Sel(\Q, E^{(l)}[\phi])
	\subset \begin{cases}
		\{ D^{(l)}_{1}, D^{(l)}_{l} \} & \text{if $l \not\equiv \pm1, 7 \bmod{16}$}, \\
		\{ D^{(l)}_{1}, D^{(l)}_{2}, D^{(l)}_{l}, D^{(l)}_{2l} \} & \text{if $l \equiv \pm1, 7 \bmod{16}$}.
	\end{cases}
\]
In a similar manner,
we can check that
\footnote{
	Indeed,
	since $[0 : 1 :  4] \in D^{(-4l)}_{-l}(\Q)$,
	it is sufficient to check that
	$D^{(-4l)}_{2}(\Q_{2}) = D^{(-4l)}_{2l}(\Q_{2}) = \emptyset$ for odd $l$,
	$D^{(-4l)}_{-1}(\Q_{2}) = \emptyset$ for $l \equiv 3 \bmod{4}$ and $l \equiv 13 \bmod{16}$.
	}
\[
	\{ D^{(-4l)}_{1}, D^{(-4l)}_{- l} \}
	\subset \Sel(\Q, E^{(-4l)}[\hat{\phi}])
	\subset \begin{cases}
		\{ D^{(-4l)}_{\pm 1}, D^{(-4l)}_{\pm l} \} & \text{if $l \equiv 1, 2, 5, 9 \bmod{16}$}, \\
		\{ D^{(-4l)}_{1}, D^{(-4l)}_{- l} \} & \text{if $l \equiv 3, 7, 11, 13, 15 \bmod{16}$}.
	\end{cases}
\]
As a conclusion,
we obtain the desired bound.

\begin{remark}
In fact, we can determine both groups $\Sel(\Q, E^{(l)}[\phi])$ and $\Sel(\Q, E^{(-4l)}[\hat{\phi}])$ completely.
For the former,
we have the equality
\[
	\Sel(\Q, E^{(l)}[\phi]) = \{ D^{(l)}_{1}, D^{(l)}_{2}, D^{(l)}_{l}, D^{(l)}_{2l} \}
	\quad \text{if $l \equiv \pm1, 7 \bmod{16}$},
\]
which is a consequence of the following facts.
\begin{itemize}
\item
$[1 : 1 : \sqrt{2(l+1)}] \in D^{(l)}_{2}(\Q_{2})$ for $l \equiv 1 \bmod{16}$,
$[(8-l)^{1/4} : 1 : 4] \in D^{(l)}_{2}(\Q_{2})$ for $l \equiv 7 \bmod{16}$,
and $[(-l)^{1/4} : 1 : 0] \in D^{(l)}_{2}(\Q_{2})$ for $l \equiv 15 \bmod{16}$.

\item
$[1 : 0 : \sqrt{2}] \in D^{(l)}_{2}(\Q_{l})$ for $l \equiv \pm1 \bmod{8}$.

\item
$[1 : 0 : \sqrt{2}] \in D^{(l)}_{2}(\R)$ for $l > 0$.
\end{itemize}
For the latter,
we have the equality
\[
	\Sel(\Q, E^{(-4l)}[\hat{\phi}]) = \{ D^{(-4l)}_{\pm 1}, D^{(-4l)}_{\pm l} \}
	\quad \text{$l \equiv 1, 2, 5, 9 \bmod{16}$},
\]
which is a consequence of the following facts.
\begin{itemize}
\item
$[2 : 1 : 4] \in D^{(-8)}_{-1}(\Q)$ for $l = 2$.

\item
$[0 : 1 : 4\sqrt{l}] \in D^{(-4l)}_{-1}(\Q_{2})$ for $l \equiv 1 \bmod{8}$
and $[2(l-4)^{1/4} : 1 : 8] \in D^{(-4l)}_{-1}(\Q_{2})$ for $l \equiv 5 \bmod{16}$.

\item
$[1 : 0 : \sqrt{-1}] \in D^{(-4l)}_{-1}(\Q_{l})$ for $l \equiv 1 \bmod{4}$.

\item
$[0 : 1 : 4\sqrt{l}] \in D^{(-4l)}_{-1}(\R)$ for $l > 0$.
\end{itemize}
\end{remark}

\section*{Acknowledgements}

The authors would like to thank Prof.\ Kenichi Bannai for his careful reading of a draft of this paper.
The authors would like to thank Yoshinori Kanamura for valuable comments on a draft of this paper and giving information on references.
The authors would like to thank Prof.\ Masato Kurihara for his careful reading and valuable comments on a draft of this paper.

\begin{bibdiv}
\begin{biblist}
\bibselect{elliptic}
\end{biblist}
\end{bibdiv}

\end{document}